\documentclass[10pt]{amsart}
\usepackage{amsmath,amsfonts,amssymb,amsthm,amscd,latexsym,euscript,xypic, verbatim}

\usepackage{epsf}
\usepackage{epsfig}
\usepackage{amstext}
\usepackage{xspace}
\usepackage{amsfonts}
\usepackage{amsmath}
\usepackage{amssymb}
\usepackage{xspace}
\usepackage{graphicx}
\usepackage[active]{srcltx}
\usepackage[all]{xy}
\usepackage{graphics}
\usepackage{color}
\usepackage{tikz}
\usepackage{caption}
\usepackage[hidelinks]{hyperref}
\usepackage{multirow}
\usetikzlibrary{patterns}
\usepackage{fp}

\colorlet{agreen}{green!50!black}
\swapnumbers
\theoremstyle{plain}

\newtheorem*{T1}{Theorem 1}
\newtheorem*{T2}{Theorem 2}

\newtheorem*{thm1.2}{(1.2) Theorem}
\newtheorem*{thm1.3}{(1.3) Theorem}
\newtheorem*{thm1.4}{(1.4) Theorem}
\newtheorem*{propA*}{Proposition A}
\newtheorem*{propB*}{Proposition B}
\newtheorem*{thmC*}{Theorem C}
\newtheorem*{propD*}{Proposition D}
\newtheorem{prop}{Proposition}[section]
\newtheorem{thm}[prop]{Theorem}

\newtheorem{cor}[prop]{Corollary}

\theoremstyle{definition}
\newtheorem{point}[prop]{}

\newtheorem{Def}[prop]{Definition}

\newtheorem*{Def*}{Definition}

\newtheorem{example}[prop]{Example}

\newtheorem{notation}[prop]{Notation}
\newtheorem*{notation*}{Notation}
\newtheorem*{question*}{Question}
\newtheorem{remark}[prop]{Remark}

\newcommand{\Q}{{ \mathbf  Q}_+}
\newcommand{\VVect}{{\text{Vect}_K}}

\newcommand{\D}{\mathcal D}

\newcommand{\Tame}{\mathbb{T}(\Q^r, \VVect)}

\pagestyle{plain}

\title{Stable Invariants for Multiparameter Persistence}

\author[O. G\" afvert]{Oliver G\" afvert}
\author[W. Chach\'olski]{Wojciech Chach\'olski}

\keywords{Multi-parameter persistence, persistence modules, noise systems, algorithms, NP-hardness} 
\begin{document}

\begin{abstract}
In this paper we explain how to convert discrete invariants into stable ones via what we call hierarchical stabilization. We illustrate  this process by constructing stable invariants for multi-parameter persistence modules with respect to the interleaving distance and so called simple noise systems. For one parameter, we recover
the standard barcode information. For more than one parameter we prove that  the constructed invariants are in general NP-hard to calculate. A consequence is that computing the feature counting function, proposed by Scolamiero et. al. (2016), is NP-hard.
\end{abstract}

\maketitle

\section{Introduction}

Understanding the underlying structure generating observed data is the aim of data analysis. A dataset is typically given in the form of a finite set $\mathcal U$ together with a sequence of $r$ maps out of $ \mathcal U$ into metric spaces called measurements. One way to analyze such data  is to use these  measurements to  produce summaries or signatures describing the structure of the data with a hope of gaining some insight into the source.
To reduce the dependency of the signatures on the accuracy of the measurements, which is necessary in the presence  of  heterogeneity, noise, variability, missing information etc., one can   utilize homological tools from  algebraic topology. Constructing and studying  homology based stable summaries is the aim of topological data analysis (TDA). For $r=1$ (only one measurement), this 
approach  has been applied very successfully  to, for example, data coming from  medicine \cite{Adcock201436, Nicolau26042011, HBM:HBM22521}, biology \cite{Chan12112013, 10.1371/journal.pcbi.1002581}, robotics \cite{7078886, pokorny2016b} and image processing \cite{bendich2010computing, doi:10.1137/070711669, 2011arXiv1112.1993A}. 

For $r>1$,  mathematical foundations still need to be developed   to construct  and understand  homology based stable signatures. This  is important  as studying correlations between different measurements is essential in data analysis. The main object of study in multi-parameter persistent homology is the \textit{multi-parameter persistence module}. This module is constructed based on the $r$ measurements on the dataset in order to capture their multi-variate relationships. The potential and value of the multi-parameter topological data analysis 
 can be seen for example in  \cite{Adcock201436} where information extracted from a $2$-parameter rank invariant yields a better classification rate than the  $1$-parameter signatures given by  barcodes.  In section~\ref{sec:hier}
 we present a simple procedure  inspired by hierarchical clustering which in our  mind is the essence of persistence. This procedure, which we call hierarchical stabilization,  is about  converting  discrete invariants into stable ones. We then use stable invariants to define continuous  multi-parameter   invariants such as  stabilized Betti numbers. The hierarchical stabilization depends on a choice of pseudometric, and the pseudometrics we consider in this paper are the ones arising from \textit{noise systems}, defined in Scolamiero et. al. in \cite{martina}. For this class of pseudometrics we can perform computations, and we show how to compute the hierarchical stabilization of the 0-th Betti number (Corollary \ref{cor redfinite}). This class is also shown to contain elements which are equivalent, in the metric sense, to the interleaving distance of \cite{MR3348168} (Proposition \ref{prop:metriceq}).
 
 We isolate a subclass of noise systems which we call \textit{simple noise systems}, containing the \textit{standard noise system} of \cite{martina}. Noise systems in this subclass can be defined constructively using what we call \textit{persistence contours} (Theorem 9.6), and are thus amendable for performing computations. The persistence contours are an essential tool in proving our main result (Theorem~\ref{thm main}) and for showing the correspondence with the interleaving distance (Theorem 12.4).
 
 Besides the lack of invariants, computational challenges are further road blocks for  making    multi-parameter  homological stable signatures  applicable. In this paper we define an algorithm to compute the stabilized 0-th Betti number for simple noise systems and in Corollary \ref{cor redfinite} we show that this computation requires only a finite number of arithmetic operations when computing with coefficients in a finite field. Our main result, however, is proving that computing  the stabilization of the 0-th Betti number is in general NP-hard when $r\geq 2$.
   \begin{T1}\label{thm main}
  	Computing the stabilization of the 0-th Betti number is NP-hard.
  \end{T1} 
The stabilized 0-th Betti number is equivalent to \textit{feature counting function}, introduced by Scolamiero at. al.  \cite{martina} and our result thus proves that computing this invariant is in general NP-hard. 

Building on our main result, we show that the hardness result still holds when computing the stabilized 0-th Betti number with respect to the interleaving distance.
  \begin{T2}
 	Computing the stabilization of the 0-th Betti number with respect to the interleaving distance is NP-hard.
 \end{T2}
 
 Since for $r=1$, the stabilization of the 0-th Betti number is similar to the usual barcode (Proposition~\ref{prop recovstandard}), our main  result  illustrates  computational difference between  $1$ and multi-parameter situations and why understanding  correlations is a much harder problem.   

For $r>1$ the moduli of multi-parameter persistence modules is a very complicated algebraic variety (\cite{Carlsson2009}), it is not possible    to classify    all such modules by easily visualizable, computable, and  stable invariants.  For $r>1$, instead of  a complete classification (as it is for $r=1$), we need other methods of  defining and extracting 
 stable  information about the data system out of the associated multi-parameter persistence module. It is exactly for that purpose we have developed the hierarchical stabilization. The need for a stabilization procedure comes from the fact that algebraic invariants of multi-parameter persistence modules such as minimal number of generators, Betti tables, Hilbert polynomials etc.\  tend to change drastically when  the initial data is altered even slightly.  That is why the classical commutative algebra invariants such as those investigated by Harrington et. al. in \cite{hal} for the multi-parameter persistence setting, need to be stabilized in order to be useful for data analysis.
 
  The most prominent stable invariant for multi-parameter persistence is the rank invariant, proposed by Carlsson and Zomorodian \cite{Carlsson2009}. This invariant can be seen as collecting all one-dimensional barcode information in a big table. Consequently, it captures the persistent information of the module, but extracting \textit{multivariate} persistent information from the table and visualizing it require more work. On the other hand, the feature counting function \cite{martina} extracts readably persistent information of the module. In this paper we generalize its construction through the process of hierarchical stabilization and our main result (Theorem \ref{thm main}) show that computing it is in general NP-hard. This is an indication that extracting multivariate persistent information from the rank invariant, and perhaps in general, might be NP-hard as well.
  
  The entire paper is written with implementation in mind.  In the first part of the paper, up to Section \ref{sec:simplenoise}, we recall the framework of \cite{martina} from a computational perspective and use it to define our stabilization procedure. For all the introduced or recalled  concepts, such us Betti diagrams and numbers, considered  noise systems,   related shifts, and the stabilized 0-th Betti number, we indicate how they can be calculated or implemented. The latter part of the paper is focused on defining algorithms to compute the constructed invariants and showing their complexity of computation.  
  
  
 
 \subsection*{Organization}The paper is organized as follows: Section 2 introduces notation and background on topological data analysis. Section 3 introduces the notion of hierarchical stabilization, which is a generalization of the invariant proposed in \cite{martina}. In Section 4-6 we review the foundations of the framework introduced in \cite{martina} from a computational perspective and describe how hierarchical stabilization can be used to stabilize classical invariants of algebraic topology. We indicate how considered invariants, noise systems, and related shits can be calculated and implemented. In Section 7 we describe the notion of noise systems introduced in \cite{martina} and in Section 8 we define a smaller class of noise systems called \textit{simple noise systems}. In Section 9 we introduce persistence contours and show (Theorem 9.6) that they provide a constructive definition of simple noise systems. In Section 10 we show how the stabilized 0-th Betti number is translated in the 1-parameter case and that by considering \textit{truncations}, we can recover the barcode (Proposition 10.3). In Section 11 we show that computing the stabilized 0-th Betti number with respect to simple noise systems is in general NP-hard (Theorem 11.6) and in Section 12 we extend this result to hold for the interleaving distance (Theorem 12.4).




\section{Notation \& background}
\begin{point}\label{pt NQ}
We use  bold letters and   black-board bold letters  to denote small categories and  their sets of objects respectively. For example $\mathbf I$ denotes the small category whose set of objects is denoted by ${\mathbb I}$.

Let $\mathbf I$ be a poset, $S\subset {\mathbb I}$  a subset of its objects, and $i$ an object in $\mathbf I$.
We write $S\leq i$ if $j\leq i$ for any $j$ in $S$.

For a poset $\mathbf I$, we use the symbol ${\mathbf I}_{\infty}$ to denote the poset  obtained 
from  $\mathbf I$ by adding an extra object $\infty$ such that $i\leq\infty$ for any  $i$  in ${ \mathbb   I}$.

For a field $K$, the symbols $\text{Vect}_K$ denotes the category of $K$-vector spaces.
Let $F\colon {\mathbf I}\to \text{Vect}_K$ be a functor. For an element  $g\in F(i)$, the object $i$ is called the
{\bf coordinate} of $g$.

Let ${ \mathbf  N}$, ${ \mathbf  Q}$, and  ${ \mathbf  R}$ denote the {\bf posets} of natural numbers $\{0<1<\ldots\}$,  {\em non-negative} rational and real numbers respectively. Thus  ${ \mathbf  N}$, ${ \mathbf  Q}$, and  ${ \mathbf  R}$ are categories with the set of morphisms from  $v$ to $w$ being empty if $v\not\leq w$ and having a unique morphism $v\leq w$ otherwise.  According to our convention, the symbols ${ \mathbb   N}$, ${ \mathbb   Q}$, ${ \mathbb   R}$ denote  the {\bf sets} of 
natural,  {\em non-negative} rational and real numbers respectively. Similarly, ${ \mathbb   N}^r$ and ${ \mathbb   Q}^r$ denote the sets of $r$-tuples of natural and non-negative rational numbers, while  ${ \mathbf  N}^r$ and ${ \mathbf  Q}^r$ denote the posets of $r$-tuples of natural and {\em non-negative} rational numbers respectively, with $(v_1,\ldots,v_r)\leq (w_1,\ldots,w_r)$   if   $v_i\leq w_i$, for all $i$.
 According to this notation, $f\colon { \mathbb   Q}^r\to { \mathbb   N}^{r}$ denotes a function between sets while
 $g\colon { \mathbf  Q}^r\to { \mathbf  N}^r$ denotes a functor between categories, which in this case is a function with the property 
  $g(v)\leq g(w)$ for any $v\leq w$ in ${ \mathbf  Q}^r$.

 The symbol $e_n$  denotes the element in  ${ \mathbb   N}^r$ whose  coordinates are $0$ except the $n$-th coordinate which is $1$. For a subset $S\subset \{1,\ldots, r\}$, we set $e_S:=\sum_{n\in S}e_n$.  Elements of  
 $S\subset \{1,\ldots, r\}$  are naturally ordered by their size $S=\{n_1<\cdots <n_{|S|}\}$ and we  call the index $i$ of the element $n_i$ its {\bf order}  in $S$. 
   \end{point}
\begin{point}\label{pt multsubset}
Let $T$ be a set. A {\bf multi-subset} of $T$ is by definition a function   $f\colon T\to{ \mathbb  N}$.  
 For such a  multi-subset, its value $f(t)$  is called the multiplicity of $t$. The set $\{t\in  T\ |\ f(t)\not=0\}$ is called the {\bf support} of $f$ and is denoted by $\text{supp}(f)$.  A multi-subset is called {\bf finite} if its support is finite, in which case the sum $\sum_{t\in  T}f(t)$ is  called the {\bf rank} of $f$ and denoted by $\text{rank}(f)$. 
We use  symbols
$\text{Mult}(T)$  to denote  the set of all multi-subsets  of $T$.
\end{point}
\begin{point}\label{pt pseudometric}
An extended pseudometric on a set $T$ is a function $d\colon T\times T\to { \mathbb   R} \cup \{\infty\}$ such that : $d(x, x) = 0$, $d(x, y) = d(y, x)$, and  $d(x, z) \leq d(x, y) + d(y, z)$ for any $x$,$y$, and $z$ in $T$ (here  we of course take $r\leq \infty$ and $\infty+r=\infty$ for any $r$ in  $ { \mathbb   R}\cup \{\infty\}$). 
\end{point}

\subsection{Topological Data Analysis}The first step in TDA is to transform a data system (a finite set $ \mathcal U$ with  $r$ measurements on it), via
for example the \u{C}ech, Vietoris-Rips, or  their  witness version constructions, 
into a  multi-parameter simplicial complex. The result of this step is a functor  $F\colon { \mathbf  Q}^r\to \text{Spaces}$
indexed by the poset  ${ \mathbf  Q}^r$ of $r$ tuples of non-negative rational numbers where $(v_1,\ldots ,v_r)\geq (w_1,\ldots ,w_r)$ if and only if
$v_i\geq w_i$ for all $i$. By applying  homology with coefficients in a field  $K$ we get a so called multi-parameter persistence module $H_i(F, K)\colon  { \mathbf  Q}^r\to\text{Vect}_K$ (a functor indexed by ${ \mathbf  Q}^r$ with values in the category of $K$ vector spaces).
This step is important as  it relaxes the dependency on the accuracy of the measurements. Since $ \mathcal U$ is a finite set, the obtained functors are quite special: they are   finitely generated  and tame~\cite{martina}. The category  of tame functors with values in a category of vector spaces has properties similar to the category of  graded modules over the polynomial ring in $r$ variables (\cite{martina}).  In particular all tame functors have free resolutions
of length no grater than $r$. It follows that for $r=1$, similarly to  finitely generated modules over a PID,   finitely generated and tame functors
can   be classified. The barcode (see Section \ref{sec:r1}) is a  particularly useful  form of this   classification for data analysis purposes since barcodes are not only efficiently computable in terms of the  data system,  but more importantly, barcodes are also stable    (small changes in the data lead to small changes in its barcode) \cite{barcodes}. For $r>1$ it was shown by Carlsson et. al. \cite{Carlsson2009} that no complete invariant exists. Therefore it is not possible    to classify    all multi-parameter persistence modules by easily computable, and  stable invariants.  For $r>1$  we therefore need other methods of  defining and extracting 
stable  information about the data system out of the associated multi-parameter persistence module. In the next section we detail how unstable invariants can be stabilized through the process of hierarchical stabilization. 


\section{Hierarchical  stabilization}\label{sec:hier}
The aim of clustering is  to partition  a data set into parts that aggregate elements  sharing similar characteristics
(whatever we choose them to be) and separate elements with different characteristics. One way to make a decision about which partition to choose  is to assemble possible partitions into a dendrogram and study how different partitions  are related to each other. Producing dendrograms, not just partitions, is what a hierarchical clustering is about. Dendrograms have an important advantage over  partitions. They form a metric space (see for example~\cite{MR2645457}), which means that one  can measure how close or how far apart dendrograms can be. In this way one can study  stability of  a particular hierarchical clustering method, which is essential in data analysis. In this section we use the same hierarchical idea to show how to turn discrete invariants  into    stable and continuous  ones.

  Consider the set $\text{Mult}(  { \mathbb   Q})$ of multi-subsets of ${ \mathbb   Q}$ (see~\ref{pt multsubset})
which are simply functions of the form $f\colon { \mathbb   Q}\to { \mathbb   N}$.
For  $\epsilon$ in  $ { \mathbb   Q}$,  two multi-sets
$f,g\colon   { \mathbb   Q}\to  { \mathbb   N}$ are defined to be {\bf $\epsilon$-close} if,
for any $\tau$ in $  { \mathbb   Q}$, the following inequalities hold: $g(\tau)\geq f(\tau+\epsilon)$ and $f(\tau)\geq g(\tau+\epsilon)$. This leads to  an extended  pseudometric (see~\ref{pt pseudometric}) on the set  $\text{Mult}(  { \mathbb   Q})$  called the {\bf interleaving distance}:
\[d(f,g):=\begin{cases}
\infty & \text{if $f$ and $g$ are not  $\epsilon$-close for any $\epsilon$}\\
\text{inf}\{\epsilon\ |\ \text{$f$ and $g$ are $\epsilon$-close}\} &
 \text{otherwise}
\end{cases}\]
From now on the symbol 
$\text{Mult}(  { \mathbb   Q})$  is used to denote the metric space of multi-subsets of ${ \mathbb   Q}$ with the interleaving distance as a metric. In the context of persistence, interleaving distances have been introduced in~\cite{persistanseChazal}  and have since then  been  extensively studied (see for example~\cite{MR3348168}).

Let $T$ be a set.  A discrete invariant on $T$ is simply a function $f\colon T\to  { \mathbb   N}$. The aim of this section  is to explain how to stabilize $f$ to obtain a continuous  invariant.  
The first step is   to choose  an extended pseudometric $d\colon T\times T\to  { \mathbb   R}\cup \{\infty\}$ on $T$ (we can not talk about stability and continuity without being able to measure distances).
For any such  choice, we construct a function $\hat{f}\colon T\to \text{Mult}(  { \mathbb   Q})$
called  the  {\bf (hierarchical) stabilization of $f$}. For   $x$ in $T$ and $\tau$  in $ { \mathbb   Q}$ define:
 \[\hat{f}(x)(\tau):=\text{min}\{f(y)\mid d(x,y)\leq \tau\}\]
Thus the value of the multi-set $\hat{f}(x)$ at $\tau$ is the smallest value   $f$  takes on the disc
 $B(x,\tau):=\{y\in T\ |\ d(x,y)\leq \tau\}$. We stress again that the stabilization $\hat{f}$ of $f$ 
 depends on the choice of an extended   pseudometric on $T$. Note that if  $\epsilon\geq \tau$, then $B(x,\tau)\subset B(x,\epsilon)$ and consequently 
$ \hat{f}(x)(\tau) \geq  \hat{f}(x)(\epsilon)$. 

 \begin{prop}\label{prop:stable} 
 For any choice of an extended pseudometric on $T$,
the function  $\hat{f}\colon T\to \text{\rm Mult}(  { \mathbb   Q})$  is $1$-Lipschitz, i.e. for any $x$ and $y$ in $T$,
$d(x,y)\geq d(\hat{f}(x),\hat{f}(y))$.
 \end{prop}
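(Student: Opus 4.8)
The plan is to unwind both relevant definitions --- the interleaving distance on $\text{Mult}(\mathbb{Q})$ and the formula $\hat{f}(x)(\tau)=\min\{f(z)\mid d(x,z)\leq\tau\}$ --- and to reduce the asserted $1$-Lipschitz bound to a single application of the triangle inequality for the extended pseudometric $d$ on $T$. If $d(x,y)=\infty$ there is nothing to prove, so I would assume $d(x,y)<\infty$, fix $x,y\in T$ together with a \emph{rational} number $\epsilon$ with $\epsilon\geq d(x,y)$, and show that $\hat{f}(x)$ and $\hat{f}(y)$ are $\epsilon$-close in the sense of Section~\ref{sec:hier}. Since $\epsilon$-closeness is symmetric in its two arguments, it suffices to verify one of the two required inequalities, say $\hat{f}(y)(\tau)\geq\hat{f}(x)(\tau+\epsilon)$ for every $\tau\in\mathbb{Q}$.

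The core step is the following. Since $\tau\geq 0$ and $d(y,y)=0$, the disc $B(y,\tau)$ contains $y$, so $\{f(z)\mid d(y,z)\leq\tau\}$ is a nonempty subset of $\mathbb{N}$ and, $\mathbb{N}$ being well-ordered, the minimum defining $\hat{f}(y)(\tau)$ is attained, say at some $z_{0}\in B(y,\tau)$. The triangle inequality then gives
\[
d(x,z_{0})\leq d(x,y)+d(y,z_{0})\leq\epsilon+\tau ,
\]
so $z_{0}\in B(x,\tau+\epsilon)$ and hence
\[
\hat{f}(x)(\tau+\epsilon)=\min\{f(z)\mid d(x,z)\leq\tau+\epsilon\}\leq f(z_{0})=\hat{f}(y)(\tau),
\]
which is exactly what was wanted. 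The remaining inequality follows by interchanging the roles of $x$ and $y$.

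To finish, I would assemble these observations. The argument above shows
\[
\{\epsilon\in\mathbb{Q}\mid \hat{f}(x)\ \text{and}\ \hat{f}(y)\ \text{are}\ \epsilon\text{-close}\}\ \supseteq\ \{\epsilon\in\mathbb{Q}\mid \epsilon\geq d(x,y)\},
\]
and taking the infimum of both sides, together with the fact that the infimum of the right-hand set equals $d(x,y)$ when $d(x,y)<\infty$, yields $d(\hat{f}(x),\hat{f}(y))\leq d(x,y)$, which is the claim.

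I do not expect a genuine obstacle here: the entire content is the triangle inequality for $d$. The only points calling for a little care are that the minimum in the definition of $\hat{f}$ is genuinely attained --- which uses nonemptiness of the disc $B(x,\tau)$, guaranteed by $d(x,x)=0$ and $\tau\geq 0$, together with the well-ordering of $\mathbb{N}$ --- and the mild bookkeeping needed to pass between a rational interleaving parameter $\epsilon$ and the possibly irrational real number $d(x,y)$, handled by approximating $d(x,y)$ from above by rationals.
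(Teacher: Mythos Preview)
Your proof is correct and follows essentially the same approach as the paper: both reduce the claim to the triangle inequality by showing $B(y,\tau)\subset B(x,\tau+\epsilon)$ whenever $\epsilon\geq d(x,y)$, and then taking infima over rational $\epsilon$. Your version is simply more explicit about why the minimum is attained and about the rational approximation of $d(x,y)$, while the paper states the ball inclusion directly.
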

 \begin{proof}
 If $d(x,y)=\infty$ there is nothing to prove. Assume $d(x,y)<\infty$. 
 By the triangle inequality, for any $\tau$ and $\epsilon$ in $ { \mathbb   Q}$ such that $\epsilon\geq d(x,y)$, we  have inclusions $B(y,\tau)\subset B(x,\tau+\epsilon)$ and
  $B(x,\tau)\subset B(y,\tau+\epsilon)$. It then follows that $\hat{f}(y)(\tau)\geq \hat{f}(x)({\tau+\epsilon})$ and
   $\hat{f}(x)({\tau})\geq \hat{f}(y)({\tau+\epsilon})$. That means that $\hat{f}(x)$ and $\hat{f}(y)$ are $\epsilon$-close. As this happens for all
   $\epsilon\geq d(x,y)$, we can conclude $d(x,y)\geq d(\hat{f}(x),\hat{f}(y))$.
    \end{proof}

The input for the  hierarchical  stabilization has three ingredients: (i) a set $T$, (ii) a discrete invariant $f\colon T\to { \mathbb   N}$, and  (iii) a choice of an extended pseudometric on $T$. The outcome is a $1$-Lipschitz function $\hat{f}\colon T\to \text{Mult}({ \mathbb   Q})$. In the next few sections we      illustrate how 
to apply  this hierarchical  stabilization  process  when (i) $T$ is the set of   finitely generated tame functors (see~\ref{pt bettieuler}),
(ii) $f\colon T\to { \mathbb   N}$ is a classical homological invariant such as the $i$-th Betti number (see~\ref{pt bettieuler}),  and
(iii) the  pseudometric on $T$ is induced by a noise system (see Section~\ref{sec noise}). 
Our aim for this article has been to
 show that, for $r\geq 2$, calculating the stabilization of  the $0$-th Betti number
is in general an NP-hard problem, which we prove in Section~\ref{sec NPhardness}. 

\section{Homological invariants of vector space valued  functors}
Let $K$ be a field and ${\mathbf I}$   a poset. In this section we recall how  homological invariants of certain functors of the form $F\colon{\mathbf I}\to  \text{Vect}_K$  are defined and constructed.

\begin{point}\label{pt free}
{\bf Freeness.}
For   $i$ in ${\mathbb I}$,
$K_{\mathbf I}(i,-)\colon {\mathbf I} \to \text{Vect}_K$ denotes the composition of the representable functor $\text{mor}_{\mathbf I}(i,-)\colon {\mathbf I} \to \text{Sets}$ with 
the linear span functor $K \colon  \text{Sets}\to \text{Vect}_K$. We often omit the subscript ${\mathbf I}$ and write $K(i,-)$. Since $\mathbf I$ is a poset, $K(i,j)=K$ if $i\leq j$ and $K(i,j)=0$ if $i\not\leq j$. The functor $K(i,-)$ has the following Yoneda property explaining why it is called {\bf free on one generator in degree $i$}. For any $F\colon {\mathbf I}\to   \text{Vect}_K$ the homomorphism $\text{Nat}(K(i,-), F)\to F(i)$ that assigns to a natural transformation 
$\phi\colon K(i,-)\to F$ the element $\phi_i(\text{id}_i)$ in $F(i)$ is an isomorphism. In this way  any element 
 $g$ in $F(i)$  (an element with coordinate $i$, see~\ref{pt NQ}) determines a unique natural transformation, denoted by the same symbol $g\colon K(i,-)\to F$, which  maps the element $\text{id}_i$ in $K(i,i)$ to $g$ in $F(i)$. In particular there is a non-zero natural transformation $K(i,-)\to K(j,-)$ if and only if $j\leq i$. Moreover, any such non-zero natural transformation  is a monomorphism. It follows that  $K(i,-)$ and $K(j,-)$ are isomorphic if and only if $i=j$.

Functors of the form 
$\oplus_{i\in {\mathbb I}}( K(i,-)\otimes V_i)$ are called {\bf free}. 
By the  Yoneda property, for any $F\colon{\mathbf I}\to\text{Vect}_K$, a sequence of elements $\{g_s\in F(i_s)\}_{s\in S}$  determines a unique
natural transformation denoted by $[g_s]_{s\in S}\colon \oplus_{s\in S}K(i_s,-)\to F$.
This Yoneda property and 
 the fact that $\mathbf I$ is a poset  imply
that two free functors $\oplus_{i\in {\mathbb I}} (K(i,-)\otimes V_i)$ and $\oplus_{i\in {\mathbb I}}( K(i,-)\otimes W_i)$  are isomorphic if and only if, for any $i$ in  ${\mathbb I}$, the vector spaces $V_i$ and $W_i$ are isomorphic.
Thus a free functor  is up to an isomorphism determined by the sequence of vector spaces $\{V_i\}_{i\in{\mathbb I}}$.
Based on the properties of this sequence, we  use the following dictionary about a free functor $P=\oplus_{i\in {\mathbb I}}( K(i,-)\otimes V_i)$:
\begin{enumerate}
\item The vector space $V_i$ is called the {\bf $0$-th homology} of $P$ at $i$ and is denoted by $H_0P(i)$. With this notation $P$ is 
isomorphic to $\oplus_{i\in {\mathbb I}} (K(i,-)\otimes H_0P(i))$.
\item  The set $\{i\in {\mathbb I}\ |\ H_0P(i)\not=0\}$ is called the {\bf support} of $P$ and is denoted by $\text{supp}(P)$.
\item $P$ is  of {\bf finite type} if $H_0P(i)$ is finite-dimensional for all $i$.
\item If  $P$ is of  finite type,  the multi-subset 
$\beta_0P(-)\colon {\mathbb I}\to {\mathbb N}$ of ${\mathbb I}$, defined as $ \beta_0P(i):= \text{dim}\, H_0P(i)$,
is called the {\bf Betti diagram} of $P$. Two finite type free functors are isomorphic if and only if they have  the same Betti diagrams.
\item  $P$ is   of {\bf finite rank} if it is of finite type and its Betti diagram $ \beta_0P(-)$ is a  finite multi-set
(see~\ref{pt multsubset}). \item  Let $P$ be of finite rank. The number   $\sum_{i\in{\mathbb I}}\text{dim}\,H_0P(i)$ is also called the {\bf rank} or the {\bf Betti number} of $P$ and is denoted  by $ \beta_0P$.
\end{enumerate}
\end{point}
\begin{point}\label{pt freecover}
{\bf Minimality.}
Recall that a morphism $\phi\colon X\to Y$ in a category is called {\bf minimal} if any morphism  $f\colon X\to X$ satisfying   $\phi=\phi f$ is an isomorphism (see~\cite{MR1476671}). A natural transformation $\phi\colon P\to F$   of functors indexed by $\mathbf I$ with values in $\text{Vect}_K$
is called a {\bf minimal cover}  of $F$ if $P$  is free and $\phi$ is both minimal and an epimorphism.
Minimal covers 
are unique up to an isomorphism: if  $\phi\colon P\to F$ and  $\phi'\colon P'\to F$
are minimal covers  of $F$, then there is an isomorphism (not necessarily unique) $f\colon P\to P'$ such that
$\phi=\phi' f$. Furthermore any  $g\colon P\to P'$,  for which $\phi=\phi' g$, is an isomorphism  (a consequence of minimality). Note however that in this generality a minimal cover may not exist.

A set  $\{g_s\in F(i_s)\}_{s\in S}$   {\bf generates} a functor
  $F\colon { \mathbf  I}\to \text{Vect}_K$  if the  induced natural transformation
$[g_s]_{s\in S}\colon \oplus_{s\in S}K(i_s,-)\to F$  
 is an epimorphism. 
A functor   is  {\bf finitely generated} if it is generated by a finite set. It is called {\bf cyclic} if it is generated by one element.
A  free functor is finitely generated if and only if it is of finite rank.
For a finitely generated   $F$, a set 
 $\{g_s\in F(i_s)\}_{s=1}^{n}$ is called a {\bf minimal  set of generators}  if it  generates $F$ and 
 no sequence with fewer elements can generate $F$.
 
 Assume  $F$ is finitely generated and admits a minimal cover
 $P\to F$.   Then $P$ is free (by definition) and  also finitely generated.
For such a functor admitting a minimal cover,   $\{g_s\in F(i_s)\}_{1\leq s  \leq n}$  is its minimal set of generators 
if and only if the  natural transformation
$ [g_s]_{1\leq s\leq n }\colon \oplus_{s= 1}^{n}K(i_s,-)\to F$
 is a minimal cover. It follows that all minimal sets of generators of $F$ have the following common property:
  the number of generators in such  a set that belong to $F(i)$ is equal to  the value of the Betti diagram $\beta_0P(i)$. In particular, the number of elements in 
 a minimal set of generators of $F$  is equal to   the rank of $P$ and  the set of coordinates
 of elements in a minimal set of generators of $F$ coincides with  $\text{supp}(P)$.
 \end{point}

\begin{point}\label{pt bettigeneral}
{\bf Resolutions.}
An exact sequence $\cdots\to P_n\xrightarrow{\delta_n}\cdots\xrightarrow{\delta_2} P_1\xrightarrow{\delta_1}P_0\xrightarrow{\delta_0} F\xrightarrow{\delta_{-1}} 0$ of functors indexed by $\mathbf I$ with values in $\text{Vect}_K$   is called a {\bf minimal free  resolution of $F$}  if 
 $P_n\to \text{Ker}(\delta_{n-1})$ is a minimal cover for any $n\geq 0$ (see~\ref{pt freecover}). Any two minimal resolutions of $F$ are isomorphic.
  Assume  $F\colon {\mathbf I}\to  \text{Vect}_K$ admits a minimal resolution as above. For such a functor we are going to use the following dictionary: 
 \begin{enumerate}
 \item The vector space $H_0P_n(i)$ (see~\ref{pt free}) is called the {\bf $n$-th homology} of $F$ at $i$ and is denoted by $H_nF(i)$. With this notation, $P_n$ is isomorphic to $\oplus_{i\in {\mathbb I}}(K(i,-)\otimes H_nF(i))$.
 \item The  set $\{i\in{\mathbb I}\ |\ H_0F(i)\not=0\}$ is called the {\bf support} of $F$ and is  denoted  by $\text{supp}(F)$.
 \item $F$ is   of   {\bf finite type} if  $P_n$ is of finite type for any $n\geq 0$ (see~\ref{pt free}).
 \item If $F$ is of finite type, the multi-subset $\beta_n F(-)\colon{\mathbb I}\to {\mathbb N}$ of ${\mathbb I}$, defined as $\beta_n F(i):=\text{dim}\,H_nF(i)$, is called the {\bf $n$-th Betti diagram} of $F$.
  \item $F$ is  of   {\bf finite rank} if  $P_n$ is of finite rank for any $n\geq 0$ (see~\ref{pt free}).
Thus  $F$ is   of   finite rank if and only if   it is of finite type and all its Betti diagrams $\beta_n F$ are finite for all $n\geq 0$.
 \item Let $F$  be of finite rank. For $n\geq 0$, the number  $\sum_{i\in{\mathbb I}} \beta_n F(i)=\sum_{i\in{\mathbb I}}\text{dim}\,H_nF(i)$ is called the {\bf $n$-th Betti number} of $F$ and is denoted by $\beta_n F$.  We also use the term {\bf  rank} of $F$  to  denote its $0$-th Betti number.
 \end{enumerate}
 Note that all the notions defined in this paragraph do not depend on the choice of a minimal free resolution of $F$. 
  \end{point}
  
 \begin{point} \label{pt bar}
  {\bf Bars.}  Let $a\leq b$ be  comparable objects in  the poset $\mathbf I$. Consider the natural transformation $K(b,-)\to  K(a,-)$ induced by the element $a\leq b$   in   $\text{mor}_{\mathbf I}(a,b)$. The cokernel of this natural transformation  is denoted by $[a,b)$. The functor $[a,b)$ is called the {\bf bar} starting in $a$ and ending in $b$. The exact sequence $0\to K(b,-)\to  K(a,-)\to [a,b)\to 0$ is a minimal  free resolution.  Consequently:
 \[H_n[a,b)(i)=\begin{cases}
 K&\text{if } n=0, i=a\\
 K & \text{if } n=1, i=b\\
 0&\text{otherwise}
 \end{cases}\ \ \ \ \ 
 \beta_n[a,b)(i)=\begin{cases}
 1&\text{if } n=0, i=a\\
 1 & \text{if } n=1, i=b\\
 0&\text{otherwise}
 \end{cases}
 \]
\[\beta_n[a,b)=\begin{cases}
 1&\text{if } n=0\text{ or } n=1\\
 0&\text{otherwise}
 \end{cases}\]

  \end{point}

\section{Functors indexed by ${ \mathbf  N}^r$}
The aim of this section is to recall how to effectively calculate the homology, the Betti diagrams, and Betti numbers of
functors of the form 
   $F\colon { \mathbf  N}^r\to \text{Vect}_K$. We  call such functors  {\bf frames}. All the material presented here  is standard  as the category of  frames  is equivalent to the category of $r$-graded modules over  the  polynomial ring with $r$ variables and coefficients in  the  field $K$.

\begin{point}{\bf Semisimplicity.}\label{pt semisimple}
A functor   $F\colon { \mathbf  N}^r\to \text{Vect}_K$ is called {\bf semisimple} if $F(v<u)$ is the zero homomorphism for any non-identity relation $v<u$.
For example, the unique functor $U_w\colon  { \mathbf  N}^r\to \text{Vect}_K$ such that $U_w(w)=K$  and $U_w(v)=0$ if $v\not =w$  is  semisimple.  A semisimple functor $F\colon { \mathbf  N}^r\to \text{Vect}_K$ is 
isomorphic to the direct sum $\oplus_{v\in   { \mathbf  N}^r}  (U_v\otimes F(v))$ and thus
two such functors are isomorphic if and only if they have isomorphic values.
%
\end{point}
\begin{point}{\bf Semisimplifications.}\label{pt semisimplif}
Consider a frame $F\colon { \mathbf  N}^r\to \text{Vect}_K$. For an object $v$ in ${ \mathbf  N}^r$ and $i$ in $ \mathbb   N$, define $\delta_i\colon \Delta F(v)_i\to \Delta F(v)_{i-1}$ to be the homomorphism:
\[\delta_i\colon  \bigoplus_{\substack{S\subset \{1,\ldots, r\}\\
|S|=i}} F(v-e_S)\to 
\bigoplus_{\substack{T\subset \{1,\ldots, r\}\\
|T|=i-1}} F(v-e_T)
\] 
 given by the matrix with the following coordinates (see~\ref{pt NQ} for the notation):
\[(\delta_i)_{T,S}:=\begin{cases}
0 &\ \text{if } T\not\subset S\\
(-1)^{a}F((v-e_S)<( v-e_T))& 
\begin{array}{l}\text{if } T\subset S 
 \text{ and where $a$ is the order}\\
 \text{in $S$ of the only element in  $S\setminus T$}\end{array}
\end{cases}\]
Note that $ \Delta F(v)_i=0$ if $i>r$.
A consequence of 
 the fact that $F$ is a functor is the equality 
$\delta_i\delta_{i-1}=0$. Thus,  for a given $v$ and  all $i$ in $ \mathbb   N$,
these homomorphisms define a chain complex  denoted by $\Delta F(v)$ and  called the {\bf Koszul complex}  of $F$ at $v$.  
 Note further that if $v\leq w$, then the homomorphism $\Delta F(v)\to\Delta F(w)$, induced by $F((v-e_S)\leq (w-e_S))$ for any 
$S\subset \{1,\ldots, r\}$, is a map of chain complexes.  
These maps in fact define a functor $\Delta F\colon { \mathbf  N}^r\to \text{Ch}_\geq(K)$ with values in the category of non-negative chain complexes $\text{Ch}_\geq(K)$.  

By taking   the $i$-th homology   we obtain  a frame $H_i(\Delta F)\colon{ \mathbf   N}^r\to  \text{Vect}_K$. Note that  if $v<w$, then $\Delta F(v<w)_i$ maps any summand $F(v-e_S)$ in $\Delta F(v)_i$ into the  
image of $\delta_{i+1}\colon \Delta F(w)_{i+1}\to \Delta F(w)_{i}$. This means that $H_i(\Delta F)(v<w)$ is the trivial homomorphism and consequently the  homology  functors $H_i(\Delta  F)\colon{\mathbb N}^r\to  \text{Vect}_K$    are all semisimple.  We therefore also call them {\bf semisimplifications} of $F$. For example:
\[H_i(\Delta  U_w)=\bigoplus_{\substack{S\subset \{1,\ldots, r\}\\
|S|=i}}U_{w+e_S}\ \ \ \ \ \ \ \ 
H_i(\Delta  K(w,-))=\begin{cases}
U_w& \text{ if } i=0\\
0& \text{ if } i>0
\end{cases}
\]
\[
H_i(\Delta \oplus_{v\in {\mathbb N}^r} (K(v,-)\otimes V_v))=\begin{cases}
\oplus_{v\in {\mathbb N}^r} U_v\otimes V_v& \text{ if } i=0\\
0& \text{ if } i>0
\end{cases}
\]
Thus if  $F$ is  free, then $F$ is isomorphic to $\oplus_{v\in {\mathbb N}^r} K(v,-)\otimes H_0(\Delta F(v))$
and we see that in  this case $H_0(\Delta F(v))$ is isomorphic to $H_0F(v)$ as defined in~\ref{pt free}. 

If $v$ is a minimal element in the  poset  ${ \mathbf   N}^r$ for  which $F(v)\not =0$, then $\Delta F(v)=F(v)$ and hence $H_0(\Delta F)(v)=F(v)$.
It follows that  $F\not =0$ if and only if $H_0(\Delta F)\not=0$. Thus the $0$-th semisimplification detects non triviality of a frame. 

Let $0\to F_0\to F_1\to F_2\to 0$ be  an exact sequence of frames.
As the Koszul complex is formed by taking direct sums and direct sums preserve exactness, we get en exact sequence of chain complexes
$0\to\Delta F_0\to\Delta F_1\to \Delta F_2\to 0$. By taking the homology we then obtain a long exact sequence of semisimple  frames:
\[\xymatrix{
             &  &  &
                 \ar@{-->} `r/8pt[d] `/10pt[l] `^dl[ll] `^r/3pt[dll] [dll]\\
             & H_1( \Delta F_0) \ar[r] & H_1( \Delta F_1) \ar[r] & H_1(\Delta F_2)  
             \ar@{->} `r/8pt[d] `/10pt[l] `^dl[ll] `^r/3pt[dll] [dll]
             \\
             & H_0( \Delta F_0) \ar[r] & H_0( \Delta F_1) \ar[r] & H_0(\Delta F_2)\ar[r] & 0
 }\]
 For example, let  $v< w$ in ${\mathbf N}^r$ and  consider the exact sequence of frames 
 $0\to K(w,-)\to K(v,-)\to [v,w)\to 0$ (see~\ref{pt bar}).  This leads to an exact sequence of  semisimple frames 
 $0\to H_1(\Delta  [v,w))\to H_0(\Delta K(w,-))\to H_0(\Delta K(v,-))\to H_0(\Delta  [v,w))\to 0$. Thus:
    \[H_i(\Delta  [v,w))=\begin{cases}
U_v& \text{ if } i=0\\
U_w& \text{ if } i=1\\
0& \text{ if } i>1
\end{cases}
\]

Since $H_0$ detects non triviality, it also  detects epimorphisms:  a natural transformation  $\phi\colon 
F_1\to F_2$ is an epimorphism if and only if   $H_0(\Delta \phi)\colon
H_0(\Delta F_1)\to H_0(\Delta F_2)$ is an epimorphism. 
Detection of epimorphisms and non triviality combine  with   the above long exact sequence can be used to show: 
\begin{itemize}
\item  $[g_s]_{s\in S}\colon \oplus_{s\in S}K(w_s,-)\to F$  is an epimorphism   if and only if  $H_0(\Delta [g_s]_{s\in S})$ is an epimorphism.
\item $\phi\colon P_0\to F$ is  a minimal cover (see~\ref{pt freecover}) if and only if $P_0$ is free and 
 $H_0(\Delta\phi)$  is an isomorphism.
 \item Any  frame $F$ admits a minimal resolution 
$\cdots\to P_1\to P_0\to  F\to  0$.
\item  If $\cdots\to P_1\to P_0\to  F\to  0$  is a minimal resolution, then 
(i) $H_nF(v)$ and  $H_n(\Delta F(v))$ are isomorphic, 
(ii)  $P_n$ is isomorphic to $\oplus_{v\in {\mathbb N}^r} K(v,-)\otimes H_n(\Delta F(v))$, (iii)
 $P_n=0$ for   $n>r$.
\end{itemize}

The main point of this paragraph is: the Koszul complex $\Delta F$ is  a very   effective tool for calculating the homology of a  frame as defined in~\ref{pt bettigeneral}.
\end{point}

\begin{point}{\bf Betti diagrams, numbers, and Euler characteristic.}\label{pt vgadfgv}
Since  polynomial rings over fields are  Noetherian, a subfunctor of a finitely generated frame is also finitely generated. It follows that if $F\colon { \mathbf  N}^r\to \text{Vect}_K$  is  finitely generated, then so are all the functors in  its  minimal resolution
$\cdots\to P_1\to P_0\to F\to 0$ and all its semisimplifications $H_n(\Delta F)$. 
It follows that $F$ is finitely generated if and only if it is of finite rank as defined in~\ref{pt bettigeneral}. 
For such an $F$ we can use the  Koszul complex to calculate 
its  $n$-th Betti diagram and Betti number  as follows:
\[\beta_n F(v)=\text{dim}\, H_n(\Delta F(v))\ \ \ \ \ \ \beta_n F =\sum_{v\in{ \mathbb  N}^r}\text{dim}\, H_n(\Delta F)(v)\]
In particular $\beta_0F$, also referred to as the rank of $F$,  is the minimal number of generators of $F$.

Since for $n>r$, $\beta_nF=0$, we can define the Euler characteristic for finitely generated  frames as follows:
\[\chi (F ):= \sum_{n=0}^r (-1)^n \beta_n F\]
For example:
\[
\beta_n K(w,-)(v)=\begin{cases}
1& \text{if } n=0\text{ and } v=w\\
0& \text{otherwise}
\end{cases}\ \ \ \ \ \ \ 
\beta_n K(w,-)=\begin{cases}
1& \text{if } n=0\\
0& \text{if } n>0
\end{cases}
\]
\[ \chi(K(w,-))=1\]

\[
\beta_n [u,w)(v)=\begin{cases}
1& \text{if } n=0\text{ and } v=u\\
1& \text{if } n=1\text{ and } v=w\\
0& \text{otherwise}
\end{cases}\ \ \ \ \ \ \ 
\beta_n [u,w)=\begin{cases}
1& \text{if } n\leq 1\\
0& \text{if } n>1
\end{cases}
\]
\[ \chi([u,w))=0\]

\[
\beta_nU_w(v)=\begin{cases}
1& \text{if } v=w+e_S \text{ where }S\subset\{1,\ldots, r\}\text{ and } |S|=n\\
0& \text{otherwise }
\end{cases}
\]
\[
\beta_n U_w={r \choose n}\ \ \ \ \   \ \ \ \ \ 
\chi(U_w)=0\]
\end{point}
\section{Tame  functors}\label{sec tame}
In this section we recall the definition and basic properties of tame functors introduced in~\cite{martina}. We also explain  how to calculate  homological invariants such as Betti diagrams, Betti numbers, and the minimal number of generators of tame functors.

\begin{point}{\bf Tameness.}\label{pt tamness}
 Choose  a positive rational number $\alpha$ called a resolution  and consider  two functors:
 $  \alpha\colon { \mathbf  N}^r\to { \mathbf  Q}^r$ which is the multiplication by $\alpha$
 that maps $(n_1,\ldots n_r)$ to $(\alpha n_1,\ldots \alpha v_r)$ and 
 $\lfloor \alpha^{-1}\rfloor\colon  { \mathbf  Q}^r\to { \mathbf  N}^r$ that maps
 $v=(v_1,\ldots v_r)$ to $\lfloor \alpha^{-1}v\rfloor:=(\lfloor {\textstyle \frac{v_1}{\alpha}}\rfloor,\ldots, \lfloor {\textstyle \frac{v_r}{\alpha}}\rfloor)$  where for a non-negative rational number $t$,  the symbol $\lfloor t\rfloor$  denotes the biggest natural number smaller or equal than  $t$.    
    Note that the composition $\lfloor \alpha^{-1}\rfloor \alpha\colon { \mathbf  N}^r\to { \mathbf  N}^r$ is the identity and the functor $\lfloor \alpha^{-1}\rfloor$ is constant on  all  sub-posets of the form:
 \[[\alpha n_1,\alpha (n_1+1))\times [\alpha n_2,\alpha (n_2+1))\times \cdots\times [\alpha n_r,\alpha (n_r+1))\subset { \mathbf  Q}^r\] 
 for any $r$-tuple  of natural numbers $(n_1,\ldots, n_r)$. 
 We call these sub-posets  right open $\alpha$-cubes.

A functor $G\colon { \mathbf  Q}^r\to \text{Vect}_K$ is called $\alpha$-{\bf tame} if it is isomorphic to the composition
$F\lfloor \alpha^{-1}\rfloor$ for some      $F\colon { \mathbf  N}^r\to \text{Vect}_K$ (called an $\alpha$-frame of $G$). Since   $ \lfloor\alpha^{-1}\rfloor \alpha$ is the identity, the frame  $F$  has to be  necessarily isomorphic to $G\alpha\colon { \mathbf  N}^r\to \text{Vect}_K$. It is then clear that $G\colon { \mathbf  Q}^r\to \text{Vect}_K$ is $\alpha$-tame if and only if its  restriction to
any  right open $\alpha$-cube  is isomorphic to a constant functor.  This happens if and only if
$G(\alpha\lfloor \alpha^{-1}v\rfloor \leq v)$ is an isomorphism for any $v$ in ${ \mathbf  Q}^r$.

A functor $G\colon { \mathbf  Q}^r\to \text{Vect}_K$ is  called {\bf tame}, if it is $\alpha$-tame for some $\alpha$ (called a resolution of $G$).
For example the free functor $K(w,-)\colon  { \mathbf  Q}^r\to \text{Vect}_K$ on one generator in degree $w=(w_1,\ldots,w_r)$  (see~\ref{pt free}) is tame. Let $n_i=0$ and $d_i=1$  if  $w_i=0$ and, in the case $w_i\not =0$, let 
$n_i$ and $d_i$ to be coprime natural numbers such that $w_i=\frac{n_i}{d_i}$. Let $d$ be a common multiple of $d_1,\ldots, d_r$. Then $K(w,-)$ is constant on any right open $1/d$-cube in ${ \mathbf  Q}^r$
and hence it is $1/d$-tame.

Tameness is preserved by finite direct sums (see~\cite[Corollary 5.3]{martina}). More generally, for an exact sequence  $0\to G_0\to G_1\to G_2\to 0$ of functors indexed by ${\mathbf  Q}^r$, if  two out of $G_0$, $G_1$, $G_2$ are  tame, then so is
the third one.   For example, for any $v\leq w$ in ${\mathbf Q}^r$, the functor $[v,w)$ (see~\ref{pt bar}) is tame. 
Tameness however  is not preserved in general by  subfunctors  or  quotients. For example
 consider $K((0,0),-)\colon { \mathbf  Q}^2\to \text{Vect}_K$.  Its   subfunctor $G_0\subset K((0,0),-)$ given by:
 \[G_0(v_1,v_2)=\begin{cases}
 0 & \text{if } v_1+v_2<1\\
 K &\text{if } v_1+v_2\geq 1
 \end{cases}\]
 is not tame. Neither is the quotient $K((0,0),-)/G_0$. Note  that in this example $G_0$ is not finitely generated. 
If $\phi\colon G_0\to G_1$ is a natural transformation of tame functors, then according to~\cite[Proposition 5.2]{martina} $\text{ker}(\phi)$, 
$\text{coker}(\phi)$, and $\text{im}(\phi)$ are also tame. 
It follows that a finitely generated subfunctor of a tame functor is always tame.  Moreover a subfunctor
of a finitely generated and tame functor is tame if and only if this subfunctor is finitely generated.
\end{point}

\begin{point}{\bf Betti diagrams, numbers, and Euler characteristic.}\label{pt bettieuler}
Consider an  $\alpha$-tame functor  $G\colon { \mathbf  Q}^r\to \text{\rm Vect}_K $. 
Let  $\cdots\to P_n\to\cdots \to P_0\to G\alpha\to 0$ be a minimal free resolution of  its frame
$G\alpha\colon{\mathbf N}^r\to {\rm Vect}_K$. According to~\ref{pt semisimplif}, the functor  $P_n$
 is isomorphic to $\oplus_{v\in{\mathbb N}^r}(K_{{ \mathbf  N}^r}(v,-)\otimes H_n\Delta(G\alpha)(v)) $.  Precomposing with $\lfloor \alpha^{-1}\rfloor\colon  { \mathbf  Q}^r\to { \mathbf  N}^r$ is 
a faithful  and  exact operation. Moreover $K_{{ \mathbf  N}^r}(v,-)\lfloor \alpha^{-1}\rfloor$ is isomorphic to
$K_{{ \mathbf  Q}^r}(\alpha v,-)$. These facts imply that: 
\[\cdots\to P_n\lfloor \alpha^{-1}\rfloor\to\cdots\to  P_0\lfloor \alpha^{-1}\rfloor\to G\alpha\lfloor \alpha^{-1}\rfloor=G\to 0\] is a minimal resolution of $G$  and  $P_n\lfloor \alpha^{-1}\rfloor$
is isomorphic to:
\[\oplus_{v\in{\mathbb N}^r}(K_{{ \mathbf  Q}^r}(\alpha v,-)\otimes H_n\Delta(G\alpha)(v))\]
  We can use  these observations to conclude:
\begin{itemize}
\item Any $\alpha$-tame functor has  a minimal free resolution by $\alpha$-tame functors. The length of such a resolution does not exceed $r$.
\item  $H_nG(u)=\begin{cases}
H_n\Delta(G\alpha)(v) &\text{if }u=\alpha v\\
0&\text{otherwise}
\end{cases}$
\item $G$ is finely generated if and only if   $G\alpha$ is finitely generated.
\item  $G$  is finitely generated if and only if it is of finite rank (see~\ref{pt bettigeneral}).
\item If $\phi\colon G_0\to G_1$ is a natural  transformation between finitely generated tame functors, then
$\text{ker}(\phi)$, $\text{im}(\phi)$, and $\text{coker}(\phi)$ are also finitely generated tame functors.
\item If $G$ is finitely generated, then:
\[\beta_nG(u)=\begin{cases}
\beta_n(G\alpha)(v)=\text{dim}\,H_n\Delta (G\alpha)(v) &\text{if }u=\alpha v\\
0& \text{otherwise}
\end{cases}\]
\[\beta_n G=\sum_{v\in { \mathbb  N}^r} \text{dim}\, H_n(\Delta G\alpha)(v)
\]
\item  Let $G$ be finitely generated and let $\{g_s\in G(v_s)\}_{s=1}^{n}$  be a minimal set of generators for $G$.
Then the number of generators in this set whose coordinate is $v$ (i.e.\  generators that belong to $G(v)$) is given by $\beta_0G(v)$. In particular  $n=\beta_0G$. Moreover the set of coordinates of the generators coincides  with $\text{supp}(G)$.
\end{itemize}

A finitely generated tame functor is of finite rank and the length  of its minimal free resolution   is  not exceeding $r$.  For such a functor $G$, we can define its Euler characteristic as:
 \[ \chi (G ):=\sum_{n=1}^{r}(-1)^n\beta_n G\]
\end{point}

Let the symbol ${\mathbb T}({ \mathbf  Q}^r, \text{Vect}_K)$  denote the set of finitely generated tame functors.
The functions $\beta_n, |\chi| \colon {\mathbb T}({ \mathbf  Q}^r, \text{Vect}_K)\to{\mathbb N}$ 
(the $n$-th Betti number and the absolute value of the Euler characteristic)
 are the discrete invariants we plan stabilize (see section~\ref{sec:hier}). The last ingredient 
needed for the hierarchical stabilization is a choice of a 
 pseudometric on $\mathbb T({ \mathbf  Q}^r, \text{Vect}_K)$. Our next step is to recall how
to construct such metrics using  so called noise systems~\cite{martina}, which  is the subject of the next section.

\section{Noise systems}\label{sec noise}
Recall from \cite[Definition 6.1]{martina} that a \textbf{noise system} 
 is a sequence    $\mathcal{C}=\{\mathcal{C}_\epsilon\}_{\epsilon \in {\mathbb Q}}$ of subsets of
   ${\mathbb T}({ \mathbf  Q}^r, \text{Vect}_K)$   indexed by non-negative rational numbers, called  components, such that:
\begin{enumerate}
\item the zero functor belongs to $ \mathcal{C_\epsilon}$ for any $\epsilon$ in ${\mathbb Q}$,
\item if $ \tau \leq  \epsilon$, then $\mathcal{C_\tau}\subseteq  \mathcal{C_\epsilon}$,
\item if $0 \to G_0 \to G_1 \to G_2 \to 0$ is an exact sequence of finitely generated tame functors, then:
\begin{enumerate}
\item if $G_1$ is in $ \mathcal{C_\epsilon}$, then so are $G_0$ and $G_2$,
\item if $G_0$ is in $ \mathcal{C_\tau}$ and $G_2$ is in $ \mathcal{C_\epsilon}$, then $G_1$ is in $ \mathcal{C_{\epsilon+\tau}}$.
\end{enumerate}
\end{enumerate}

A noise system $\{\mathcal{C}_\epsilon\}_{\epsilon \in {\mathbb Q}}$ is {\bf closed under direct sums} if,  for any $\epsilon$, the set   $ \mathcal{C_\epsilon}$ is closed under direct sums: $G_0$ and $G_1$ belongs to 
$ \mathcal{C_\epsilon}$ if and only if their direct sum $G_0\oplus G_1$ belongs to $ \mathcal{C_\epsilon}$.
Noise systems closed under direct sums are quite special. They are determined by  cyclic functors (see~\ref{pt freecover}):

\begin{prop}\label{prop cyclcialldirsum}
Let $\mathcal{C}=\{\mathcal{C}_\epsilon\}_{\epsilon \in {\mathbb Q}}$  and $\mathcal{D}=\{\mathcal{D}_\epsilon\}_{\epsilon \in {\mathbb Q}}$ 
be noise systems closed under direct sums. Then $\mathcal{C}_\epsilon=\mathcal{D}_\epsilon$ if and only if they contain the same cyclic functors.
\end{prop}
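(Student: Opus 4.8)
The plan is to prove the equivalence in the obvious direction first and then do the substantial work for the reverse implication. The forward direction is immediate: if $\mathcal{C}_\epsilon = \mathcal{D}_\epsilon$ for all $\epsilon$, then in particular $\mathcal{C}$ and $\mathcal{D}$ contain the same cyclic functors, so nothing needs to be shown. The content is in the converse: assuming $\mathcal{C}_\epsilon$ and $\mathcal{D}_\epsilon$ contain the same cyclic functors for every $\epsilon$, I want to conclude $\mathcal{C}_\epsilon = \mathcal{D}_\epsilon$ as full subsets of $\mathbb{T}(\mathbf{Q}^r, \text{Vect}_K)$. By symmetry it suffices to show $\mathcal{C}_\epsilon \subseteq \mathcal{D}_\epsilon$, i.e. to take an arbitrary finitely generated tame $G$ in $\mathcal{C}_\epsilon$ and show $G \in \mathcal{D}_\epsilon$.

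The key idea is to decompose $G$ into cyclic pieces using a filtration by a minimal set of generators, and then to propagate membership through the axioms. Concretely: let $\{g_s \in G(v_s)\}_{s=1}^n$ be a minimal set of generators of $G$ (which exists and is finite since $G$ is finitely generated tame, by~\ref{pt bettieuler}). Let $G^{(k)} \subseteq G$ be the subfunctor generated by $g_1, \ldots, g_k$, so that $0 = G^{(0)} \subseteq G^{(1)} \subseteq \cdots \subseteq G^{(n)} = G$, and each $G^{(k)}$ is finitely generated, hence tame. For each $k$ there is a short exact sequence
\[
0 \to G^{(k-1)} \to G^{(k)} \to G^{(k)}/G^{(k-1)} \to 0,
\]
and the quotient $Q_k := G^{(k)}/G^{(k-1)}$ is cyclic, generated by the image of $g_k$. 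Now since $G = G^{(n)} \in \mathcal{C}_\epsilon$, repeated application of axiom~(3a) (subfunctors and quotients of objects in $\mathcal{C}_\epsilon$ stay in $\mathcal{C}_\epsilon$) gives that every $G^{(k)} \in \mathcal{C}_\epsilon$ and every cyclic quotient $Q_k \in \mathcal{C}_\epsilon$. By hypothesis each $Q_k$, being cyclic, therefore lies in $\mathcal{D}_\epsilon$ as well.

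Next I reassemble: I claim $G^{(k)} \in \mathcal{D}_{k\epsilon}$ by induction on $k$. The base case $G^{(0)} = 0 \in \mathcal{D}_0 \subseteq \mathcal{D}_{0}$ holds by axiom~(1). For the inductive step, $G^{(k-1)} \in \mathcal{D}_{(k-1)\epsilon}$ and $Q_k \in \mathcal{D}_\epsilon$, so axiom~(3b) applied to the short exact sequence above yields $G^{(k)} \in \mathcal{D}_{(k-1)\epsilon + \epsilon} = \mathcal{D}_{k\epsilon}$. Hence $G = G^{(n)} \in \mathcal{D}_{n\epsilon}$ — but this is $\mathcal{D}_{n\epsilon}$, not $\mathcal{D}_\epsilon$, which is weaker than what I want. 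The fix is to use that $\mathcal{D}$ is closed under direct sums: the direct sum $\bigoplus_{k=1}^n Q_k$ of the cyclic quotients lies in $\mathcal{D}_\epsilon$ (since each summand is cyclic and in $\mathcal{D}_\epsilon$, and $\mathcal{D}_\epsilon$ is closed under direct sums), and one checks that $G$ is a subquotient — in fact a quotient of a subfunctor, or directly a subfunctor — of $\bigoplus_{k=1}^n Q_k$; then axiom~(3a) gives $G \in \mathcal{D}_\epsilon$ directly. I expect the main obstacle to be exactly this last point: getting the $\epsilon$ (rather than $n\epsilon$) bound requires the direct-sum hypothesis and a clean argument that $G$ embeds into, or is a quotient of a subfunctor of, $\bigoplus_k Q_k$. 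The cleanest route is to observe that the associated graded $\bigoplus_k G^{(k)}/G^{(k-1)}$ and $G$ have the same composition factors, and that for subfunctor-closed, quotient-closed, direct-sum-closed classes, membership only depends on the multiset of cyclic (indeed composition) factors; once that is phrased correctly the inequality $\mathcal{C}_\epsilon \subseteq \mathcal{D}_\epsilon$ follows, and symmetry closes the proof.
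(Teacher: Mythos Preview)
Your argument has a real gap at the very end. After correctly showing that each successive quotient $Q_k=G^{(k)}/G^{(k-1)}$ is cyclic and lies in $\mathcal{D}_\epsilon$, you assert that $G$ is a subquotient of $\bigoplus_k Q_k$, or more generally that membership in a sub-, quotient-, and direct-sum-closed class depends only on composition factors. Both claims are false. Take $r=1$, $G=[0,2)$, filtered by $G^{(1)}=[1,2)\subset G$. The associated graded is $[1,2)\oplus[0,1)$, in which the transition map from degree $0$ to degree $1$ is zero; hence every subquotient of $[1,2)\oplus[0,1)$ also has zero transition map there, whereas $[0,2)$ does not. So $[0,2)$ is not a subquotient of its associated graded. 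Correspondingly, the standard noise component $({\mathcal S}_1)_1$ (bars of length $\leq 1$) is closed under subfunctors, quotients, and direct sums, contains both $[0,1)$ and $[1,2)$, but does not contain $[0,2)$: membership genuinely depends on more than the composition factors.

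The fix is to use cyclic \emph{subfunctors} rather than cyclic subquotients, which is exactly what the paper does. For each generator $g_s$ let $\langle g_s\rangle\subset G$ be the subfunctor it generates; this is cyclic, and as a subfunctor of $G\in\mathcal{C}_\epsilon$ it lies in $\mathcal{C}_\epsilon$ by axiom~(3a), hence in $\mathcal{D}_\epsilon$ by hypothesis. Since $\mathcal{D}_\epsilon$ is closed under direct sums, $\bigoplus_s\langle g_s\rangle\in\mathcal{D}_\epsilon$. Now the natural map $\bigoplus_s\langle g_s\rangle\to G$ (sum of the inclusions) is surjective because the $g_s$ generate $G$, so $G$ is a quotient and lies in $\mathcal{D}_\epsilon$ by~(3a). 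Your filtration is unnecessary and the inductive $n\epsilon$ bound is a symptom of having taken the wrong cyclic pieces.
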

\begin{proof}
Assume $\mathcal{C}_\epsilon$ and $\mathcal{D}_\epsilon$  contain the same cyclic functors.
Let $G$ be in $\mathcal{C}_\epsilon$ and choose its minimal set of generators
$\{g_s\in G(v_s)\}_{s=1}^{n}$. Then, for any $s$, the cyclic subfunctor of $G$ generated by $g_s$ 
also belongs to $\mathcal{C}_\epsilon$. By assumption, these cyclic subfunctors  belong to
$\mathcal{D}_\epsilon$ and so does their direct sum. As a quotient of this direct sum, the functor  $G$ is therefore  a member of $\mathcal{D}_\epsilon$. This shows the inclusion $\mathcal{C}_\epsilon\subset \mathcal{D}_\epsilon$.
By symmetry   we then get an equality $\mathcal{C}_\epsilon= \mathcal{D}_\epsilon$.
\end{proof}

We think about the component $\mathcal{C}_\epsilon$ as the closed disc of radius $\epsilon$ around the zero functor and  regard  its  members  as  functors which are $\epsilon $ small. This notion of smallness can be extended to a notion of proximity among arbitrary finitely generated tame functors $G_0$ and $G_1$ as follows. First, recall~\cite[Section 8]{martina} 
that a natural transformation $\psi$ between functors in  ${\mathbb T}({ \mathbf  Q}^r, \text{Vect}_K)$ is called  an \textbf{$\epsilon$-equivalence} if $\text{ker}(\psi)$ is in $ \mathcal{C}_a$, $\text{coker}(\psi)$ is in $\mathcal{C}_b$, and $a+b\leq \epsilon$. Second, define  $G_0$ and $G_1$ to be  $\epsilon$-\textbf{close} if there exists  $G_2$ in  ${\mathbb T}({ \mathbf  Q}^r, \text{Vect}_K)$ and natural transformations $G_0\leftarrow G_2\colon \phi$ and $\psi\colon G_2\to G_1$ such that $\phi$ is a $\tau$-equivalence, $\psi$ is a $ \mu$-equivalence, and $\tau+ \mu\leq \epsilon$.
Finally set:
\[
d(G_0, G_1) := \begin{cases} \text{inf}\, \{\epsilon \ |\  \text{$G_0$ and $G_1$ are $\epsilon$-close}\} &\text{if $G_0$ and $G_1$ are close for some $\epsilon$} \\
\infty &  \text{otherwise }
\end{cases}
\]
A key result of \cite{martina} states that the above defined $d$ is a  pseudometric on the set ${\mathbb T}({ \mathbf  Q}^r, \text{Vect}_K)$. For  $F$ in ${\mathbb T}({ \mathbf  Q}^r, \text{Vect}_K)$, denote the set of all functors  that are $\epsilon$-close to $F$ by  $B(F, \epsilon)$. This is the closed disc around $F$ of radius $\epsilon$ with respect to the metric $d$. For example $B(0, \epsilon)= \mathcal{C_\epsilon}$.

We now have all three  ingredients needed  for  hierarchical stabilization (see Section~\ref{sec:hier}):
\begin{enumerate}
\item[(i)] a set ${\mathbb T}({ \mathbf  Q}^r, \text{Vect}_K)$,
\item[(ii)] discrete invariants $\beta_n,\  |\chi |\colon{\mathbb T}({ \mathbf  Q}^r, \text{Vect}_K)\to {\mathbb N}$,
\item[(iii)] a pseudometric (induced by a noise system) on ${\mathbb T}({ \mathbf  Q}^r, \text{Vect}_K)$.
\end{enumerate}
For any choice of noise system $\{{\mathcal C}_{\epsilon}\}_{\epsilon\in{\mathbb Q}}$,  by minimizing over the discs $B(G, \tau)$, we   obtain  $1$-Lipschitz functions (see Section~\ref{sec:hier}):
\[
\widehat{\beta}_n,\    \widehat{|\chi|}\colon {\mathbb T}({ \mathbf  Q}^r, \text{Vect}_K)\to \text{\rm Mult}(  { \mathbb   Q})
\]
For example, let $v\leq w$ be in ${\mathbf Q}^r$. Then:
\[
\widehat{\beta}_0K(v,-)(\tau)=\begin{cases}
1 &\text{if } K(v,-)\not\in {\mathcal S}_{\tau}\\
0 &\text{if } K(v,-)\in {\mathcal S}_{\tau}
\end{cases}\ \ \ \ 
\widehat{\beta}_0[v,w)(\tau)=\begin{cases}
1 &\text{if } [v,w)\not\in {\mathcal S}_{\tau}\\
0 &\text{if } [v,w)\in {\mathcal S}_{\tau}
\end{cases}
\]

The rest of the paper is devoted to explaining  a strategy of how to calculate the invariant $\widehat{\beta}_0$ with respect to   so  called  \textit{simple}
noise systems. 

\section{Simple noise systems}\label{sec:simplenoise}
In this section we define a subclass of noise systems which we call \textit{simple noise systems}, containing e.g. the \textit{standard noise system} of \cite{martina} (see Example \ref{ex standardpc}). The noise systems in this subclass have extra properties that allow us to perform computations on them. The most important property of simple noise systems is that they can be defined constructively using \textit{persistence contours} (see Section \ref{sec:pers}).

Recall that the $0$-th Betti number $\beta_0G$ (also called the rank) of a finitely
generated tame functor $G$ is equal to   the size of any minimal set of generators of $G$.
Let    $\mathcal{C}=\{\mathcal{C}_\epsilon\}_{\epsilon \in {\mathbb Q}}$ be a noise system and $G\colon { \mathbf  Q}^r\to\text{Vect}_K$ a finitely
generated tame functor. 
By definition, to calculate the value of   $\widehat{\beta}_0 G\colon{\mathbb Q}\to{\mathbb N}$ at $\tau$, we need to find the smallest  $0$-th Betti number
among the functors in the disc $B(G,\tau)$.  In general, the set $B(G,\tau)$ might be infinite even in the case the underlying field is finite. Our strategy 
is to understand under what circumstances we only need to calculate the $0$-th Betti numbers of finitely many functors in order to identify  $\widehat{\beta}_0 G({\tau})$.
For that purpose consider the following  collection of subfunctors of $G$:
\[B_{\subseteq}(G, \tau):=\{ G_0 \ |\   G_0\text{ is a tame subfunctor of $G$ such that }G/G_0 \text{ belongs to } \mathcal{C}_\tau \}\]
This collection may or may not have any minimal element with respect to the  inclusion relation. However, in the case that $\mathcal{C}_\tau$ is closed under direct sums, if some minimal element in 
$B_{\subseteq}(G, \tau)$ exists, than it has to be unique: 

\begin{prop}\label{prop:exmin} 
Assume  $\mathcal{C}_\tau$ is closed under direct sums.
Let $F$ and $G$  be finitely generated tame functors.
\begin{enumerate}
\item   If $G_0$ and $G_1$ are minimal elements in $B_{\subseteq}(G, \tau)$, then $G_0=G_1$.
\item Let $G[\tau]$ and $F[\tau]$ be minimal elements in $B_{\subseteq}(G, \tau)$ and $B_{\subseteq}(F, \tau)$ respectively.  Then any  natural transformation  $\phi\colon G\to F$ maps $G[\tau]$ into $F[\tau]$. Moreover, if 
$\phi$ is an epimorphism, then so is its restriction $\phi\colon G[\tau]\to F[\tau]$.
\item If  $F[\tau]$, $G[\tau]$ and $ (F\oplus G)[\tau]$ exist, then  $(F\oplus G)[\tau]=F[\tau]\oplus G[\tau]$.
\end{enumerate}
\end{prop}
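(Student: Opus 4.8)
The plan is to extract from the definition of $B_{\subseteq}(G,\tau)$ a single intersection-closure property, derive (1) and (2) from it, and then obtain (3) by applying (2) to the canonical inclusions into a direct sum. The property I would establish first is: \emph{if $G_0,G_1\in B_{\subseteq}(G,\tau)$, then $G_0\cap G_1\in B_{\subseteq}(G,\tau)$}. To prove it, consider the natural transformation $G\to (G/G_0)\oplus(G/G_1)$, $g\mapsto(g+G_0,\,g+G_1)$; its kernel is $G_0\cap G_1$, so $G/(G_0\cap G_1)$ is isomorphic to a subfunctor of $(G/G_0)\oplus(G/G_1)$. The latter is finitely generated tame and lies in $\mathcal C_\tau$ because $\mathcal C_\tau$ is closed under direct sums; since $G/(G_0\cap G_1)$ is a quotient of $G$ it is finitely generated, hence, being a finitely generated subfunctor of a tame functor, it is tame, and then $G/(G_0\cap G_1)\in\mathcal C_\tau$ by condition (3a) in the definition of a noise system. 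Finally $G_0\cap G_1$ is tame by the two-out-of-three property applied to $0\to G_0\cap G_1\to G\to G/(G_0\cap G_1)\to 0$, and it is a finitely generated subfunctor of the finitely generated tame functor $G$, so it belongs to $\mathbb T(\mathbf Q^r,\text{Vect}_K)$ and hence to $B_{\subseteq}(G,\tau)$. Statement (1) is then immediate: two minimal elements $G_0,G_1$ both contain $G_0\cap G_1\in B_{\subseteq}(G,\tau)$, so minimality gives $G_0=G_0\cap G_1=G_1$.

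For (2), fix $\phi\colon G\to F$. I would first note that $\phi^{-1}(F[\tau])$ is the kernel of the composite $G\xrightarrow{\phi}F\to F/F[\tau]$, a natural transformation of finitely generated tame functors, so $\phi^{-1}(F[\tau])$ is finitely generated tame; moreover $\phi$ induces a monomorphism $G/\phi^{-1}(F[\tau])\hookrightarrow F/F[\tau]\in\mathcal C_\tau$, so $G/\phi^{-1}(F[\tau])\in\mathcal C_\tau$ and therefore $\phi^{-1}(F[\tau])\in B_{\subseteq}(G,\tau)$. Intersecting with $G[\tau]$ and invoking the closure property together with the minimality of $G[\tau]$ yields $G[\tau]=G[\tau]\cap\phi^{-1}(F[\tau])\subseteq\phi^{-1}(F[\tau])$, i.e.\ $\phi(G[\tau])\subseteq F[\tau]$. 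If in addition $\phi$ is an epimorphism, then it induces an epimorphism $G/G[\tau]\to F/\phi(G[\tau])$, so $F/\phi(G[\tau])$, being a quotient of $G/G[\tau]\in\mathcal C_\tau$, lies in $\mathcal C_\tau$ by condition (3a); since $\phi(G[\tau])$ is the image of a natural transformation of finitely generated tame functors it is finitely generated tame, whence $\phi(G[\tau])\in B_{\subseteq}(F,\tau)$. As $\phi(G[\tau])\subseteq F[\tau]$ and $F[\tau]$ is minimal, $\phi(G[\tau])=F[\tau]$.

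Statement (3) then follows by combining the two. On one side, $(F\oplus G)/(F[\tau]\oplus G[\tau])\cong (F/F[\tau])\oplus(G/G[\tau])$ is finitely generated tame and lies in $\mathcal C_\tau$ (closure under direct sums), so $F[\tau]\oplus G[\tau]\in B_{\subseteq}(F\oplus G,\tau)$. On the other side, applying (2) to the canonical inclusions $F\to F\oplus G$ and $G\to F\oplus G$ shows that $F[\tau]\oplus 0$ and $0\oplus G[\tau]$ are both contained in $(F\oplus G)[\tau]$, hence $F[\tau]\oplus G[\tau]\subseteq(F\oplus G)[\tau]$. Minimality of $(F\oplus G)[\tau]$ now forces $(F\oplus G)[\tau]=F[\tau]\oplus G[\tau]$.

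The mathematical content sits entirely in the first paragraph — the embedding $G/(G_0\cap G_1)\hookrightarrow(G/G_0)\oplus(G/G_1)$ played against closure of $\mathcal C_\tau$ under direct sums and under subfunctors and quotients. The only real care the write-up demands is the recurring bookkeeping: at each step one must certify that the subfunctors, quotients, kernels and images that appear are finitely generated and tame, so that they are genuine elements of $B_{\subseteq}(-,\tau)$ and the noise-system axioms are applicable; this is handled throughout by the facts recalled in Section~\ref{sec tame} (finite direct sums preserve tameness; kernels, images and cokernels of natural transformations between finitely generated tame functors are finitely generated tame; a finitely generated subfunctor of a tame functor is tame).
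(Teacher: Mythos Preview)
Your proof is correct and follows essentially the same route as the paper's: both arguments hinge on the map $G\to(G/G_0)\oplus(G/G_1)$ for part~(1), on the kernel $\phi^{-1}(F[\tau])$ of $G\to F\to F/F[\tau]$ for part~(2), and on applying~(2) to the inclusions $F,G\hookrightarrow F\oplus G$ together with closure under direct sums for part~(3). The only cosmetic differences are that you isolate the intersection-closure property as a separate lemma and you are more explicit about the finite-generation and tameness bookkeeping, which the paper leaves implicit; also, in~(2) your detour through $G[\tau]\cap\phi^{-1}(F[\tau])$ is harmless but unnecessary, since minimality of $G[\tau]$ already gives $G[\tau]\subseteq\phi^{-1}(F[\tau])$ directly once you know $\phi^{-1}(F[\tau])\in B_{\subseteq}(G,\tau)$.
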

\begin{proof}
\noindent
(1):\quad
The functor $G/G_0\oplus G/G_1$ belongs to $\mathcal{C}_\tau$ as the noise is  closed under direct sums. The image of the homomorphism $\pi\colon G\to G/G_0\oplus G/G_1$, given by the projections, 
is therefore also   a member of $\mathcal{C}_\tau$ and consequently  $\text{ker}(\pi)=G_0\cap G_1$  belongs  to $B_{\subseteq}(G, \tau)$. Minimality of  \medskip $G_0$ and $G_1$  implies $G_0=G_0\cap G_1=G_1$. \smallskip

\noindent
(2):\quad Since $F/(F[\tau])$ belongs to ${\mathcal C}_{\tau}$, then so does the image of the composition of $\phi\colon G\to F$ and the quotient $F\to F/(F[\tau])$. The kernel of this composition therefore belongs to
$B_{\subseteq}(G, \tau)$. By construction $\phi$ maps this kernel into $F[\tau]$. To finish the argument just notice that, by minimality,
$G[\tau]$ is a subfunctor of this kernel. 

If  $\phi$ is an epimorphism, then  $F/\phi(G[\tau])$  is a quotient of  $G/(G[\tau])$.
Consequently  $F/\phi(G[\tau])$  belongs to $\mathcal{C}_\tau$ and hence we have an inclusion
$F[\tau]\subset \phi(G[\tau])$.   This together with what   we already have shown implies equality
$F[\tau]= \phi(G[\tau])$.
\smallskip

\noindent
(3):\quad Since  $\mathcal{C}_\tau$ is closed under direct sums, we have  $(F\oplus G)[\tau]\subseteq F[\tau]\oplus G[\tau]$.
By part (2), the inclusions $F\subset F\oplus G\supset G$ induce inclusions $F[\tau]\subset (F\oplus G)[\tau]\supset G[\tau]$. That gives $F[\tau]\oplus G[\tau]\subset (F\oplus G)[\tau]$.
\end{proof}

In the case $\mathcal{C}_\tau$ is closed under direct sums, if it exists, we denote this minimal element in $B_{\subseteq}(G, \tau)$ by $G_{\mathcal{C}}[\tau]\subset G$, or simply by $G[\tau]\subset G$ if the noise system is fixed.  The subfunctor $G[\tau]\subset G$ is also called the {\bf ${\tau}$-shift} of $G$.
Note that in the case $G[\tau]\subset G$ exists, since $\mathcal{C}_\tau$ is closed under taking quotients,
a  tame subfunctor $G_0\subseteq G$ has the property that $G/G_0$ belongs to $\mathcal{C}_\tau$  if and only if $G[\tau]\subseteq G_0$.  Thus in this case 
$B_{\subseteq}(G, \tau)$ can be identified with the set:
\[\{G_0\ |\ G_0\text{ is a tame subfunctor of $G$ such that  }G[\tau]\subseteq G_0\subseteq G\}\]

We are now ready to define what we call a simple noise system. 

\begin{Def}\label{def simplenoise}
A noise system $\mathcal{C}=\{\mathcal{C}_\epsilon\}_{\epsilon \in {\mathbb Q}}$ is called {\bf simple} if:
\begin{itemize}
\item it is closed under direct sums,
\item for any  finitely
generated tame functor $G\colon { \mathbf  Q}^r\to\text{\rm Vect}_K$ and any  $\tau$  in ${\mathbb Q}$, the set 
$B_{\subseteq}(G, \tau)$ contains the minimal element $G[\tau]\subset G$,
\item $\beta_{0} G[\tau]\leq \beta_0 G$ for any $\tau$  in ${\mathbb Q}$.
\end{itemize}
\end{Def}

Recall  that for a subset $X\subset {\mathbb Q}^r$ and an element $v$ in ${\mathbb Q}^r$, we write $X\leq v$ if $w\leq v$ for any $w$ in $X$ (see~\ref{pt NQ}). 

\begin{thm}\label{thm calsimple}
Let $\{\mathcal{C}_\epsilon\}_{\epsilon \in {\mathbb Q}}$ be a simple noise system and $G\colon{\mathbf Q}^r\to \text{\rm Vect}_K$  a finitely generated tame  functor.  Let  $v$ be in ${\mathbb Q}^r$ such  that $\text{\rm supp}(G[\tau]) \leq v$.
Then:
\[\widehat{\beta}_0 G(\tau)=\text{\rm min}\left\{
\begin{array}{l|c}
\multirow{2}{*}{$\beta_0F$} &F 
\text{ is a tame subfunctor of  } G\text{ such that}\\
 & G[\tau]\subset F\subset G\text{ and }\text{\rm supp}(F)\leq v
\end{array}\right\}
\]
%
\end{thm}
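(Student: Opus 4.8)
The plan is to prove the two inequalities separately; the content is entirely in ``$\leq$''. For ``$\geq$'': if $G[\tau]\subseteq F\subseteq G$, then $G/F$ is a quotient of $G/G[\tau]\in\mathcal C_\tau$, hence $G/F\in\mathcal C_\tau$ by the noise axioms, so the inclusion $F\hookrightarrow G$ is a $\tau$-equivalence and $F$ is $\tau$-close to $G$. Every $F$ competing on the right therefore lies in $B(G,\tau)$, whence $\text{rank}_0F\geq\widehat{\text{rank}}_0G(\tau)$, and taking the minimum settles this direction.

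For ``$\leq$'' I would show that for \emph{every} $H\in B(G,\tau)$ there is a subfunctor $F$ with $G[\tau]\subseteq F\subseteq G$, $\text{supp}(F)\leq v$ and $\text{rank}_0F\leq\text{rank}_0H$; minimizing over $H$ then gives the claim (and shows the minimum over $B(G,\tau)$ is attained by such an $F$). So I would fix $H$ together with a span $G\xleftarrow{\phi}Z\xrightarrow{\psi}H$ witnessing $\tau$-closeness, with $\ker\phi\in\mathcal C_a$, $\text{coker}\phi\in\mathcal C_b$, $\ker\psi\in\mathcal C_c$, $\text{coker}\psi\in\mathcal C_d$ and $a+b+c+d\leq\tau$ (all the functors below are finitely generated and tame, so the noise axioms and the material of~\ref{pt bettieuler} apply).

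I expect the main obstacle to be \textbf{keeping the rank under control while transporting $H$ into $G$}: pushing $Z$ forward to $G$, or to $H$, is useless, since quotients of $Z$ and subfunctors of $H$ may have arbitrarily many generators and so are bounded by nothing in terms of $\text{rank}_0H$. The way around it is to transport only a minimal generating set of a suitable \emph{shift}, where the third clause of Definition~\ref{def simplenoise} ($\text{rank}_0G'[\sigma]\leq\text{rank}_0G'$) supplies exactly the missing bound. Concretely: put $I_G:=\text{im}(\phi)\subseteq G$; the transformation $\psi$ descends to $\bar\psi\colon I_G\cong Z/\ker\phi\to H':=H/\psi(\ker\phi)$ with $\ker\bar\psi$ a quotient of $\ker\psi$ (so in $\mathcal C_c$) and $\text{coker}\bar\psi\cong\text{coker}\psi$ (so in $\mathcal C_d$). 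As $H'$ is a quotient of $H$, $\text{rank}_0H'\leq\text{rank}_0H$; by simplicity the shift $H'[d]$ exists, $\text{rank}_0H'[d]\leq\text{rank}_0H'$, and $H'[d]\subseteq\text{im}(\bar\psi)$ because $H'/\text{im}(\bar\psi)=\text{coker}\bar\psi\in\mathcal C_d$. I would then pick a minimal generating set $\{\eta_j\}$ of $H'[d]$, lift each $\eta_j$ through $\bar\psi$ to an element $\xi_j\in I_G$ of the same coordinate, and let $F_0\subseteq I_G\subseteq G$ be generated by the $\xi_j$; then $\bar\psi(F_0)=H'[d]$ and $\text{rank}_0F_0\leq\#\{\xi_j\}=\text{rank}_0H'[d]\leq\text{rank}_0H$.

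Two verifications would remain. First, $G[\tau]\subseteq F_0$: the map $\bar\psi$ induces $I_G/F_0\to H'/H'[d]$ whose kernel is a quotient of $\ker\bar\psi$ and whose cokernel is $\cong\text{coker}\bar\psi$, so $I_G/F_0\in\mathcal C_{c+d}$; then $G/F_0$ is an extension of $G/I_G=\text{coker}\phi\in\mathcal C_b$ by $I_G/F_0$, hence lies in $\mathcal C_{b+c+d}\subseteq\mathcal C_\tau$, so $F_0\in B_{\subseteq}(G,\tau)$ and therefore $G[\tau]\subseteq F_0$. Second, the support condition: if $F_0$ has a minimal generator at a coordinate $w\not\leq v$, it contributes nothing to $F_0(u)$ for any $u\leq v$ (otherwise $w\leq u\leq v$), so the subfunctor $F_0'$ generated by the remaining generators agrees with $F_0$ at all coordinates $\leq v$; since $\text{supp}(G[\tau])\leq v$ this forces $G[\tau]\subseteq F_0'$, the quotient $G/F_0'$ is still a quotient of $G/G[\tau]\in\mathcal C_\tau$, and $\text{rank}_0F_0'<\text{rank}_0F_0$, so iterating finitely often produces the required $F$. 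The one delicate point is whether the infimum defining $d$ is attained; I would handle this through the convention that $B(G,\tau)$ is the set of functors $\tau$-close to $G$, so that a witnessing span with total defect $\leq\tau$ exists by definition.
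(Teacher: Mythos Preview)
Your proposal is correct and follows essentially the same strategy as the paper: shift the target by $d$ (using simplicity for the rank bound), lift a minimal generating set back along the span, verify via the noise axioms that the resulting subfunctor contains $G[\tau]$, and then trim generators not $\leq v$. Your passage to $I_G=\operatorname{im}\phi$ and $H'=H/\psi(\ker\phi)$ is a mild reorganization of the paper's direct argument with $G_1$ and $G_2$; one small remark is that in showing $I_G/F_0\in\mathcal C_{c+d}$ the relevant fact is that the \emph{image} of $I_G/F_0\to H'/H'[d]$ lies in $\mathcal C_d$ (as a subfunctor of $H'/H'[d]$), not the cokernel statement you cite, though the conclusion is unaffected.
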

\begin{proof}
The proof is very similar to the proof of~\cite[Proposition 10.1]{martina}. 
Let $n:=\widehat{\beta}_0 G(\tau)$  
and $G\leftarrow G_1:\!\phi$ and $\psi\colon G_1\to G_2$
be natural transformations between finitely generated tame functors such that $\beta_0 G_2=n$ and:
\[\text{ker}(\phi)\in \mathcal{C}_a,\ \ \ \text{coker}(\phi)\in \mathcal{C}_b,\ \ \ \text{ker}(\psi)\in \mathcal{C}_c,\ \ \ \text{coker}(\psi)\in \mathcal{C}_d,\ \ \ a+b+c+d\leq \tau\]
Consider the subfunctor $G_2[d]\subset G_2$ and let  $\{g_s\in G_2[d](v_s)\}_{s=1}^{k}$
be  a minimal set of generators.   Since the considered noise system is simple, $k\leq n$. By  the minimality of  $G_2[d]\subset G_2$, we have  $G_2[d]\subset \text{im}(\psi)$.
 Thus we can choose elements  $\{g'_s\in G_1(v_s)\}_{s=1}^{k}$ 
that are mapped via $\psi$ to the chosen minimal generators of  $G_2[d]$. Let $G'_{1}\subset G_1$ be the subfunctor generated by these elements and $G'\subset G$ be its image $\phi(G'_1)$. All these functors are finitely generated and tame  and they can be arranged into a commutative diagram where the indicated natural transformations are monomorphisms and epimorphisms:
\[\xymatrix{
G'\ar@{^(->}[d] &G'_1\ar@{^(->}[d]\ar@{->>}[l]\ar@{->>}[r] & G_2[d] \ar@{^(->}[d]\\
G & G_1\ar[l]_{\phi} \ar[r]^{\psi}& G_2
}\]
We claim that $G/G'$ belongs to $\mathcal{C}_{b+c+d}\subset \mathcal{C}_\tau$. This is a consequence of the additivity property of  noise systems applied to the  following exact sequences:
\[G_1/G'_1\to G/G'\to\text{coker}(\phi)\to 0\]
\[0\to \text{ker}(\psi)/(\text{ker}(\psi)\cap G'_1)\to G_1/G'_1\to G_2/G_2[d]
\]

The above claim  implies  $G[\tau]\subset G'$. 
Let $\{x_s\in G'(w_s)\}_{s=1}^{l}$ be a minimal set of generators of $G'$.  
Define $G''\subset G'$ to be generated by $\{x_s\ |\ 1\leq s\leq l\text{ and } w_s\leq v\}$.
Since $\text{supp}(G[\tau])\leq v$,  for  degree reasons  $G[\tau]$ is included in $G''$.
By construction  $\beta_0 G''\leq \beta_0 G'\leq k$ and $k\leq n$, which gives:
\[\widehat{\beta}_0 G(\tau) \geq  \text{\rm min}\left\{
\begin{array}{l|c}
\multirow{2}{*}{$\beta_0F$} &F 
\text{ is a tame subfunctor of  } G\text{ such that}\\
 & G[\tau]\subset F\subset G\text{ and }\text{\rm supp}(F)\leq v
\end{array}\right\}\] 
As the reverse inequality $\leq$ is obvious, the  equality of  the theorem is  proven.
\end{proof}

The set of tame subfunctors of $G$ considered in  Theorem~\ref{thm calsimple} can still be infinite. 
 To reduce the calculation of  $\widehat{\beta}_0 G ({\tau})$ to a finite set we need an additional  step.  

\begin{cor}\label{cor redfinite}
Let $\{\mathcal{C}_\epsilon\}_{\epsilon \in {\mathbb Q}}$ be a simple noise system and $G\colon{\mathbf Q}^r\to \text{\rm Vect}_K$  a  finitely generated tame  functor.  Choose a rational number $\alpha$ such that
both $G$ and its shift $G[\tau]$ are $\alpha$-tame. Choose $v$ in ${\mathbb N}^r$ so that $\text{\rm supp}(G[\tau]\alpha)\leq v$. Then:
\[\widehat{\beta}_0 G(\tau)=\text{\rm min}\left\{
\begin{array}{l|c}
\multirow{2}{*}{$\beta_0F$} &F\colon{\mathbf N}^r\to \text{\rm Vect}_K 
\text{ is a subfunctor of $G\alpha$ s.t.: }\\
 & G[\tau]\alpha\subset F\subset G\alpha\text{ and }\text{\rm supp}(F)\leq v
\end{array}\right\}
\]
\end{cor}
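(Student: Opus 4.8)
The plan is to deduce Corollary~\ref{cor redfinite} from Theorem~\ref{thm calsimple} by transporting the minimization over tame subfunctors of $G$ to a minimization over (ordinary) subfunctors of the frame $G\alpha$, using the equivalence between $\alpha$-tame functors and their $\alpha$-frames recorded in~\ref{pt bettieuler}. Recall from~\ref{pt tamness} that precomposition with $\lfloor\alpha^{-1}\rfloor\colon{\mathbf Q}^r\to{\mathbf N}^r$ and restriction along $\alpha\colon{\mathbf N}^r\to{\mathbf Q}^r$ are mutually inverse (up to isomorphism) on the level of $\alpha$-tame functors and frames, and that both operations are exact and faithful. The first step is therefore to apply Theorem~\ref{thm calsimple}, choosing a rational point of ${\mathbf Q}^r$ bounding $\mathrm{supp}(G[\tau])$: concretely, if $v\in{\mathbb N}^r$ satisfies $\mathrm{supp}(G[\tau]\alpha)\le v$, then $\alpha v\in{\mathbb Q}^r$ satisfies $\mathrm{supp}(G[\tau])\le\alpha v$ (since $\mathrm{supp}(G[\tau])=\alpha\cdot\mathrm{supp}(G[\tau]\alpha)$ by the tameness dictionary). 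So Theorem~\ref{thm calsimple} gives $\widehat{\mathrm{rank}}_0 G(\tau)$ as the minimum of $\mathrm{rank}_0 F$ over tame subfunctors $F$ with $G[\tau]\subset F\subset G$ and $\mathrm{supp}(F)\le\alpha v$.

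The second step is to set up the bijection between the two index sets of the minimization. Given a tame subfunctor $F$ with $G[\tau]\subset F\subset G$, the subfunctor $F$ is automatically $\alpha$-tame: it is finitely generated (being sandwiched between $G[\tau]$ and $G$ — more carefully, $F$ is tame and a subfunctor of the finitely generated tame $G$, hence finitely generated by the last sentence of~\ref{pt tamness}), and one checks it is constant on right-open $\alpha$-cubes because $G$ and $G[\tau]$ are. Hence restriction along $\alpha$ sends $F$ to a subfunctor $F\alpha$ of $G\alpha$ with $G[\tau]\alpha\subset F\alpha\subset G\alpha$, and $\mathrm{supp}(F\alpha)\le v$ iff $\mathrm{supp}(F)\le\alpha v$ (since $\alpha$ is an order isomorphism onto its image). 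Conversely, given any subfunctor $F'$ of $G\alpha$ with $G[\tau]\alpha\subset F'\subset G\alpha$, the precomposition $F'\lfloor\alpha^{-1}\rfloor$ is an $\alpha$-tame subfunctor of $G\alpha\lfloor\alpha^{-1}\rfloor=G$ lying between $G[\tau]$ and $G$; here one uses exactness of precomposition to see $F'\lfloor\alpha^{-1}\rfloor\hookrightarrow G\alpha\lfloor\alpha^{-1}\rfloor$ and to identify the two composites with the identity. These two assignments are mutually inverse, and they match the support conditions.

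The third step is to check that the quantity being minimized is preserved under this bijection, i.e.\ $\mathrm{rank}_0 F=\mathrm{rank}_0(F\alpha)$. This is immediate from the bullet points of~\ref{pt bettieuler}: $\mathrm{rank}_0$ of an $\alpha$-tame functor equals $\mathrm{rank}_0$ of its $\alpha$-frame, since the minimal free resolution of $F$ is obtained from that of $F\alpha$ by precomposition with $\lfloor\alpha^{-1}\rfloor$, which carries $K_{{\mathbf N}^r}(w,-)$ to $K_{{\mathbf Q}^r}(\alpha w,-)$ and hence preserves the number of generators. Combining Steps 1–3, the two minima coincide, which is the assertion of the corollary.

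I do not expect a serious obstacle here; the only point requiring a little care is the claim in Step 2 that a tame subfunctor $F$ with $G[\tau]\subset F\subset G$ is $\alpha$-tame for the \emph{same} $\alpha$ chosen for $G$ and $G[\tau]$ — this is what lets us restrict along the fixed $\alpha$ rather than passing to a finer resolution. The clean way to see it: $F$ is constant on each right-open $\alpha$-cube $Q$ because for $w\in Q$ the map $F(\alpha\lfloor\alpha^{-1}w\rfloor\le w)$ is a monomorphism (restriction of $G(\alpha\lfloor\alpha^{-1}w\rfloor\le w)$, an isomorphism since $G$ is $\alpha$-tame) and an epimorphism once we know $\dim F(\alpha\lfloor\alpha^{-1}w\rfloor)=\dim F(w)$, which follows by the same monomorphism argument together with finite-dimensionality — alternatively, apply the criterion in~\ref{pt tamness} that $G$ being $\alpha$-tame makes $G(\alpha\lfloor\alpha^{-1}w\rfloor\le w)$ an isomorphism, hence $F(\alpha\lfloor\alpha^{-1}w\rfloor\le w)$ a split monomorphism that must be an isomorphism for dimension reasons. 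Everything else is bookkeeping with the standard equivalence.
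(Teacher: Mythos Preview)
Your Step~2 contains a genuine gap: the claim that every tame subfunctor $F$ with $G[\tau]\subset F\subset G$ is $\alpha$-tame for the \emph{same} $\alpha$ is false. Take $r=1$, the standard contour $S(v,\epsilon)=v+\epsilon$, $\alpha=1$, $G=K(0,-)$, $\tau=1$, so $G[\tau]=K(1,-)$; both are $1$-tame. The functor $F=K(1/2,-)$ is tame, satisfies $G[\tau]\subset F\subset G$ and $\mathrm{supp}(F)=\{1/2\}\leq\alpha v$ for any admissible $v\geq 1$, yet $F$ is not $1$-tame since $F(0)=0\neq K=F(1/2)$. Your dimension argument breaks exactly here: restricting the isomorphism $G(\alpha\lfloor\alpha^{-1}w\rfloor\leq w)$ to $F$ gives only a monomorphism, hence $\dim F(\alpha\lfloor\alpha^{-1}w\rfloor)\leq\dim F(w)$, not equality (in the example the map is $0\hookrightarrow K$). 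There is no mechanism forcing the reverse inequality, so neither the ``same monomorphism argument'' nor the ``split monomorphism \dots\ for dimension reasons'' remark goes through. Consequently $F\mapsto F\alpha$ and $F'\mapsto F'\lfloor\alpha^{-1}\rfloor$ are not mutually inverse on the two index sets, and the comparison of minima in Step~3 is unjustified.

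The paper avoids the bijection altogether and argues the two inequalities separately. For $\widehat{\mathrm{rank}}_0 G(\tau)\geq\text{(right side)}$ it takes a minimizer $F'$ supplied by Theorem~\ref{thm calsimple} and simply restricts along $\alpha$: one checks $G[\tau]\alpha\subset F'\alpha\subset G\alpha$, $\mathrm{supp}(F'\alpha)\leq v$, and $\mathrm{rank}_0(F'\alpha)\leq\mathrm{rank}_0 F'$ (push a minimal generating set of $F'$ to $F'\alpha$), with no need for $F'$ to be $\alpha$-tame. The reverse inequality is your (correct) converse direction: for any frame $F$ in the right-hand set, $F\lfloor\alpha^{-1}\rfloor$ is $\alpha$-tame, sits between $G[\tau]$ and $G$, and has the same rank. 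Your argument is easily repaired by replacing the bijection with these two one-sided comparisons.
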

\begin{proof}
Since $\text{\rm supp}(G[\tau]\alpha)\leq v$, then $\text{\rm supp}(G[\tau])\leq  \alpha v$. We can then use Theorem~\ref{thm calsimple} to obtain a  finitely generated tame functor
 $F'\colon{\mathbf Q}^r\to \text{Vect}_K$ such that $ G[\tau]\subset F'\subset G$,
 $\text{supp}(F')\leq \alpha v$, and $\beta_0F'=\widehat{\beta}_0 G(\tau)$. By restricting along
 $\alpha\colon{\mathbf N}^r\to {\mathbf Q}^r$ we get a sequence of frames $G[\tau]\alpha\subset F'\alpha\subset G\alpha$.  Since $\text{supp}(F')\leq \alpha v$, we also have $\text{supp}(F'\alpha)\leq  v$. 
Finally, as the  restriction preserve the rank,  $\beta_0F'\alpha=\widehat{\beta}_0 G(\tau)$. That shows 
 that the left side of the equality in the statement of the corollary can not be smaller than the right side.
 As the opposite inequality is clear, the corollary is proven.
 \end{proof}

 A consequence of Corollary~\ref{cor redfinite} is that  to determine the value $\widehat{\beta}_0 G(\tau)$  for a finite field $K$ we need to perform only finitely many operations as the following set is finite:
  \[\{ F\colon{\mathbf N}^r\to \text{\rm Vect}_K\ |\ F  \text{ is a subfunctor of $G\alpha$ such that }  \text{\rm supp}(F)\leq v \}\]

Definition~\ref{def simplenoise} describes a simple noise system  in terms of certain  global properties of its components. 
For   implementation and calculation purposes a constructive description of simple noise systems is needed. A description that  would also lead to  an explicit construction  of the subfunctor $G[\tau]\subset G$,
so we can use Theorem~\ref{thm calsimple} and Corollary~\ref{cor redfinite} to calculate the stabilization of $\beta_0F$. This is what we will focus on in the next section, culminating in Theorem \ref{thm:contour bijection}, showing that there is a bijection between simple noise systems and persistence contours.

\section{Persistence contours}\label{sec:pers}
Recall (see~\ref{pt NQ})
that ${ \mathbf   Q}^r_{\infty}$ is  the poset obtained 
from  ${ \mathbf   Q}^r$ by adding an extra object $\infty$ such that $v\leq\infty$ for any  $v$  in ${ \mathbb   Q}^r$.
For any $v$ in ${ \mathbb   Q}^r_{\infty}$, we set $v+\infty:=\infty$.

\begin{notation}\label{not adfaergsfhdg}
Let  $\{\mathcal{C}_\epsilon\}_{\epsilon \in {\mathbb Q}}$ be a simple noise system. 
For   $(v,\epsilon) $   in ${ \mathbb  Q}^r_{\infty}\times {\mathbb Q}$, define $C(v,\epsilon)$ in ${ \mathbb  Q}^r_{\infty}$ as follows:
\begin{itemize}
\item 
 $C(\infty,\epsilon):=\infty$. 
\item   Consider the functors $K(v,-)[\epsilon]\subset K(v,-)$.
Because the  noise system  is simple,  $\beta_0 K(v,-)[\epsilon]\leq \beta_0 K(v,-)=1$. Thus either $K(v,-)[\epsilon]$ is trivial or it is free on one generator. 
 If $K(v,-)[\epsilon]=0$, then we set 
$C(v,\epsilon):=\infty$. If $K(v,-)[\epsilon]$ is free on one generator, 
define $C(v,\epsilon)$ to be the object  in ${ \mathbf  Q}^r$  for which  $K(v,-)[\epsilon]$ is isomorphic to 
 $K(C(v,\epsilon),-)$. 
\end{itemize}
\end{notation}
 
We claim that if $w\leq  v$ in ${ \mathbb  Q}^r_{\infty}$ and $\tau\leq \epsilon$ in ${ \mathbb  Q}$, then
$C(w,\tau)\leq C(v,\epsilon)$. This means that  the association $(v,\epsilon)\mapsto C(v,\epsilon)$ is in fact a functor $C\colon {\mathbf Q}^r_{\infty}\times {\mathbf Q}\to {\mathbf Q}^r_{\infty}$. The claim is obvious if $v=\infty$ or $v$ is in ${\mathbb Q}^r$ and $K(v,-)[\epsilon]=0$. In the remaining case of    $v$ being  in ${\mathbb Q}^r$ and $K(v,-)[\epsilon]$ being   non-trivial, 
since  ${\mathcal C}_\tau\subset {\mathcal C}_\epsilon$, we have  monomorphisms:
 \[\xymatrix{
 K(C(v,\epsilon),-)= K(v,-)[\epsilon]\ar@{^(->}[r] & K(v,-)[\tau]\ar@{^(->}[r] \ar@{^(->}[d] & K(v,-)\ar@{^(->}[d] \\
  &  K(w,-)[\tau]\ar@{^(->}[r] &  K(w,-)
 }\]
The functor $K(w,-)[\tau]$  is therefore  non-trivial. The relation $C(w,\tau)\leq C(v,\epsilon)$ is then a consequence of the fact that   
$ K(C(v,\epsilon),-)$ is a subfunctor of $ K(C(w,\tau),-)$.
 
 The functor $C\colon {\mathbf Q}^r_{\infty}\times {\mathbf Q}\to {\mathbf Q}^r_{\infty}$ is not arbitrary.
 It satisfies the following two additional properties   for any $v$ in $ {\mathbb Q}^r_{\infty}$ and any $\epsilon $ and $\tau$ in  	${\mathbb 	Q}$:
 \[v\leq C(v,\epsilon),\ \ \ \ \ \ \ C(C(v,\epsilon),\tau)\leq C(v,\epsilon+\tau)\]
 The first property is  clear when $C(v,\epsilon)=\infty$.
 If $C(v,\epsilon)\not=\infty$, then $v\not=\infty$. In this case the  relation $v\leq C(v,\epsilon)$ is a consequence of    $K(C(v,\epsilon),-)$ being a subfunctor of $K(v,-)$. 
 The second property is also clear if $C(v,\epsilon+\tau)=\infty$. If $C(v,\epsilon+\tau)<\infty$, then we have a sequence
 of monomorphisms:
 \[
K(C(v,\epsilon+\tau),-)= K(v,-)[\epsilon+\tau]\subset  K(v,-)[\epsilon][\tau]\subset  K(v,-)[\epsilon]\subset  K(v,-)
\]
Thus $K(v,-)[\epsilon][\tau]$ is non trivial, and according to our notation, isomorphic to 
$K(C(C(v,\epsilon),\tau),-)$. As  this functor contains $K(C(v,\epsilon+\tau),-)$, we get the claimed
relation $C(C(v,\epsilon),\tau)\leq C(v,\epsilon+\tau)$.

We formalize these properties in the following definition:
\begin{Def}
	\label{def:func}
	A \textbf{persistence contour}  is a functor 
	$C\colon  {\mathbf Q}^r_{\infty}\times {\mathbf Q}\to {\mathbf Q}^r_{\infty}$ such that, for any $v$ in $ {\mathbb Q}^r_{\infty}$ and any $\epsilon $ and $\tau$ in  	${\mathbb 	Q}$: 
			\begin{enumerate}
			\item $v\leq C(v,\epsilon)$,
			\item $C(C(v,\epsilon),\tau)\leq C(v,\epsilon+\tau)$.
		\end{enumerate}
\end{Def}

What we call persistence contours were introduced, in a different setting, by Bubenick et. al. under the name superlinear families \cite{Bubenik2015}.
We have seen how a simple noise system leads to a persistence contour.
It turns out that this procedure can be reversed. 

\begin{notation}\label{not svcadfvadfva}
Let $D\colon {\mathbf Q}_{\infty}^r\times {\mathbf Q}\to {\mathbf Q}_{\infty}^r$ be a persistence contour. 
\begin{itemize}
\item
For $\epsilon$ in ${\mathbb Q}$ define
${\mathcal D}_{\epsilon}\subset {\mathbb T}({ \mathbf  Q}^r, \text{Vect}_K)$ to be the collection of the  finitely generated tame functors $G\colon { \mathbf  Q}^r\to\text{Vect}_K$ for which   $G\big(v\leq D(v,\epsilon)\big)$  is the zero homomorphism whenever  $D(v,\epsilon)\not=\infty$.
\item
Let $G$ be a finitely generated tame functor. Choose its  minimal set of generators $\{g_s\in G(v_s)\}_{s=1}^{n}$ and $\tau$ in ${\mathbb Q}$.  For any $s$ such that  $D(v_s,\tau)\not=\infty$, set $h_s:=G\big(v_s\leq D(v_s,\tau)\big)(g_s)\in G(D(v_s,\tau))$.
Define $G[\tau]\subset G$ to be the subfunctor generated by  $\{h_s\ | \ 1\leq s\leq n\text{ and } D(v_s,\tau)\not=\infty\}$. 
\end{itemize}
\end{notation}
If $D(v,\tau)=\infty$, then since $D$ is a functor, $D(w,\tau)=\infty$ for any $v\leq w$. 
It follows that $K(v,-)$ belongs to ${\mathcal D}_{\tau}$ if and only if $D(v,\tau)=\infty$.
Furthermore:
\[K(v,-)[\tau]=\begin{cases}
0& \text{if } D(v,\tau)=\infty\\
K\big(D(v,\tau),-\big) &\text{if } D(v,\tau)\not=\infty
\end{cases}\]

Note also that if $C,D\colon {\mathbf Q}_{\infty}^r\times {\mathbf Q}\to {\mathbf Q}_{\infty}^r$ are persistence contours such that $C(v,\epsilon)\leq D(v,\epsilon)$ for any $(v,\epsilon)$ in ${\mathbf Q}_{\infty}^r\times {\mathbf Q}$, then ${\mathcal C}_\epsilon\subset {\mathcal D}_\epsilon$  for any $\epsilon$ in ${\mathbb Q}$.

 A priori the subfunctor $G[\tau]\subset G$, defined in~\ref{not svcadfvadfva},  depends on the choice of the generators of   $G$. In Proposition~\ref{prop svfwrtg}
we are going to show that $G[\tau]\subset G$ is the minimal subfunctor,  with respect to inclusion, for which $G/(G[\tau])$ belongs to 
${\mathcal D}_{\tau}$. 


\newcommand{\DD}{{\mathcal D}}

\begin{prop}\label{prop svfwrtg}
The sequence $\D=\{\DD_\epsilon\}_{\epsilon\in {\mathbb Q}}$ is a simple noise system.
\end{prop}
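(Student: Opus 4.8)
The plan is to verify, in turn, the three requirements of Definition~\ref{def simplenoise}: that $\mathcal{D}=\{\mathcal{D}_\epsilon\}_{\epsilon\in{\mathbb Q}}$ is a noise system closed under direct sums, that for every finitely generated tame $G$ and every $\tau$ the collection $B_\subseteq(G,\tau)$ has a minimal element, and that this minimal element has $0$-th Betti number at most $\text{rank}_0 G$. Since a candidate for the minimal element, namely $G[\tau]$, has already been written down explicitly in~\ref{not svcadfvadfva}, the bulk of the work is (a) checking the axiomatic closure properties of the $\mathcal{D}_\epsilon$ and (b) identifying $G[\tau]$ with the minimal subfunctor whose quotient lies in $\mathcal{D}_\tau$.

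First I would dispatch the routine parts. The zero functor lies in every $\mathcal{D}_\epsilon$. For monotonicity: if $\tau\le\epsilon$ then $D(v,\tau)\le D(v,\epsilon)$ since $D$ is a functor, so $D(v,\epsilon)\ne\infty$ forces $D(v,\tau)\ne\infty$ and $G(v\le D(v,\epsilon))=G(D(v,\tau)\le D(v,\epsilon))\,G(v\le D(v,\tau))$, whence $\mathcal{D}_\tau\subseteq\mathcal{D}_\epsilon$. Axiom~(3)(a) is the observation that if $G_1(v\le w)=0$ then this structure map also vanishes on any subfunctor (it is a restriction) and on any quotient (use surjectivity of $G_1(v)\twoheadrightarrow G_2(v)$ and commutativity of the naturality square). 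Closure under direct sums is immediate because $(G_0\oplus G_1)(v\le w)$ is the direct sum of the two structure maps, hence zero iff both are.

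The one substantial axiom is additivity~(3)(b), and this is exactly where property~(2) of a persistence contour from Definition~\ref{def:func} is needed; I expect this to be the main obstacle. Suppose $0\to G_0\to G_1\to G_2\to 0$ with $G_0\in\mathcal{D}_\tau$ and $G_2\in\mathcal{D}_\epsilon$, and fix $v$ with $D(v,\epsilon+\tau)\ne\infty$. Then $D(v,\epsilon)\ne\infty$ and, by property~(2), $D(D(v,\epsilon),\tau)\le D(v,\epsilon+\tau)$ so that $D(D(v,\epsilon),\tau)\ne\infty$ too. Given $g\in G_1(v)$, its image in $G_2(v)$ is killed by $G_2(v\le D(v,\epsilon))$, so $G_1(v\le D(v,\epsilon))(g)$ lies in $G_0(D(v,\epsilon))$; applying that $G_0\in\mathcal{D}_\tau$ at the point $D(v,\epsilon)$, this element is killed by $G_0(D(v,\epsilon)\le D(D(v,\epsilon),\tau))$, hence by the same map read inside $G_1$; therefore $G_1(v\le D(D(v,\epsilon),\tau))(g)=0$, and post-composing with $G_1(D(D(v,\epsilon),\tau)\le D(v,\epsilon+\tau))$ (legitimate by property~(2)) gives $G_1(v\le D(v,\epsilon+\tau))(g)=0$. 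Thus $G_1\in\mathcal{D}_{\epsilon+\tau}$.

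It remains to treat the minimal element and the rank bound. To see $G/G[\tau]\in\mathcal{D}_\tau$: every element of $G(v)$ is a $K$-linear combination of images $G(v_s\le v)(g_s)$ with $v_s\le v$, and for such $s$, whenever $D(v,\tau)\ne\infty$ we also have $D(v_s,\tau)\ne\infty$ and $G(v_s\le D(v,\tau))(g_s)=G(D(v_s,\tau)\le D(v,\tau))(h_s)\in G[\tau](D(v,\tau))$; hence $G(v\le D(v,\tau))$ sends $G(v)$ into $G[\tau](D(v,\tau))$, which is the vanishing condition for the quotient, and $G[\tau]$ is tame as a finitely generated subfunctor of a tame functor (see~\ref{pt tamness}). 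Conversely, if $G_0\subseteq G$ is tame with $G/G_0\in\mathcal{D}_\tau$, then for each $s$ with $D(v_s,\tau)\ne\infty$ the image of $g_s$ in $(G/G_0)(v_s)$ is killed by the structure map to degree $D(v_s,\tau)$, so $h_s\in G_0(D(v_s,\tau))$, giving $G[\tau]\subseteq G_0$. Therefore $G[\tau]$ is the (unique, by Proposition~\ref{prop:exmin}(1), and so generator-independent) minimal element of $B_\subseteq(G,\tau)$. Finally $\text{rank}_0 G[\tau]\le\text{rank}_0 G$ since $G[\tau]$ is generated by the at most $n=\text{rank}_0 G$ elements $h_s$. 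Everything except axiom~(3)(b) is bookkeeping with structure maps and generators.
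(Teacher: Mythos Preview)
Your proof is correct and follows essentially the same route as the paper's: you verify the noise-system axioms directly from the structure maps (with axiom~(3)(b) handled via the two-step factorization through $D(v,\epsilon)$ and $D(D(v,\epsilon),\tau)$, invoking property~(2) of a persistence contour exactly as the paper does), then show $G[\tau]$ is the minimal element of $B_\subseteq(G,\tau)$ by the same generator-pushing argument, and read off the rank bound from the explicit generating set. The only addition is your remark that $G[\tau]$ is tame as a finitely generated subfunctor of a tame functor and your appeal to Proposition~\ref{prop:exmin}(1) for generator-independence, both of which are implicit in the paper's treatment.
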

\begin{proof}
\noindent
{\bf Noise conditions.}
It is clear that the zero  functor belongs to  $\DD_\epsilon$ for any $\epsilon$.
Let $\tau\leq \epsilon$,  $G$ be in $\DD_{\tau}$, and   $v$ be an element in ${\mathbb Q}^r$ for which   $D(v,\epsilon)\not=\infty$. Since $D(v,\tau)\leq D(v,\epsilon)$, we also have $D(v,\tau)\not=\infty$. The  
 homomorphism  $G(v\leq D(v,\tau))$ is therefore  trivial, and hence so is the composition  $G\big(v\leq D(v,\epsilon)\big)=G\big(D(v,\tau)\leq D(v,\epsilon)\big)\circ G\big(v\leq D(v,\tau)\big)$. Thus $G$ is in 
 $\DD_{\epsilon}$. This shows $\DD_{\tau}\subset \DD_{\epsilon}$.
The first two requirements for  a noise system are therefore  satisfied (see Section~\ref{sec noise}).

Let $0\to G_0\to G_1\to G_2\to 0$ be an exact sequence. Assume  $G_1$ belongs to $ \DD_\epsilon$. By definition  $G_1(v\leq D(v, \epsilon))$ is the zero  homomorphism for any $v$ for which $D(v,\epsilon)\not=\infty$. 
Since $G_1(v)\to G_2(v)$ is an epimorphism and  $G_0(D(v,\epsilon))\to G_1{}(D(v,\epsilon))$  is a monomorphism, the homomorphisms $G_2(v\leq D(v,\epsilon))$ and  $G_0(v\leq D(v,\epsilon))$ have to be also trivial. 
Thus both  $G_0$ and $G_2$  belong to $ \DD_\epsilon$.

 Assume  $G_0\in \DD_\tau$ and $G_2\in \DD_\epsilon$. We need to show $G_1$ belongs to $ \DD_{\epsilon+\tau}$. Let $v$ be in ${\mathbb Q}^r$ such that $D(v,\tau+ \epsilon)\not=\infty$.  Since $D(v,\epsilon)\leq  D(v,\tau+ \epsilon)\geq D(D(v,\epsilon),\tau)$, we have $D(v,\epsilon)\not=\infty\not=D(D(v,\epsilon),\tau)$ and thus we can form the following commutative diagram
 where the indicated homomorphisms are trivial:
	\[\xymatrix@C=20pt@R=25pt{
		0 \rto &  G_0(v) \rto \dto & G_1(v) \rto \dto^{G_1(v\leq D(v,\epsilon))}  & G_2(v) \rto \dto_0 & 0 \\
		0 \rto &  G_0(D(v, \epsilon)) \rto \dto_0 & G_1(D(v, \epsilon)) \rto 
		\dto^{G_1(D(v,\epsilon)\leq D(D(v,\epsilon),\tau))}  & G_2(D(v, \epsilon)) \rto \dto & 0 \\
		0 \rto &  G_0\big(D(D(v, \epsilon), \tau)\big) \rto & G_1\big(D(D(v, \epsilon), \tau)\big) \rto  & G_2\big(D(D(v, \epsilon), \tau)\big) \rto & 0
	}\]
Commutativity and exactness implies that the middle vertical composition:  
\[G_1\big(D(v,\epsilon)\leq D(D(v,\epsilon),\tau)\big)\circ G_1\big(v\leq D(v,\epsilon)\big)=G_1\big(v\leq D(D(v,\epsilon),\tau)\big)\] 
is also the zero homomorphism. Consequently so is the composition:
\[G_1\big(v\leq D(v,\epsilon+\tau)\big)=G\big( D(D(v,\epsilon),\tau)\leq D(v,\epsilon+\tau)\big)\circ G\big(v\leq D(D(v,\epsilon),\tau)\big)\]
This means  $G_1$ belongs to $ \DD_{\epsilon+\tau}$.
\medskip

\noindent
{\bf Simplicity.}
A direct sum of zero homomorphisms is a zero homomorphism. The noise system  $\D$ is therefore closed under direct sums. Let $G$ be a finitely generated tame functor. We are going to show that $G[\tau]\subset G$, defined in~\ref{not svcadfvadfva}, is the minimal subfunctor for which the quotient $G/G[\tau]$ belongs to $\DD_\tau$. Since
by construction $\beta_0G[\tau]\leq \beta_0G$,  we could thus conclude  $\D$ to be simple.

Let  $\{g_s\in G(v_s)\}_{s=1}^{n}$  be  a  minimal set of generators of $G$. By definition 
 $G[\tau]\subset G$ is the subfunctor generated by  $\{h_s\ | \ 1\leq s\leq n\text{ and } D(v_s,\tau)\not=\infty\}$.
First we  show   $G/(G[\tau])$ belongs to $\DD_\tau$. Let $v$ be an object in
${\mathbb Q}^r$  such that $D(v,\tau)\not=\infty$. Any  element   $x$   in  $G(v)$ can be written as $x=\sum_{v_s\leq v} \lambda_s G(v_s\leq v)(g_s)$ and thus:
\[
G\big(v\leq D(v,\tau)\big)(x)=\sum_{v_s\leq v}\lambda_s G\big(D(v_s,\tau)\leq D(v,\tau )\big)\circ G\big(v\leq D(v_s,\tau)\big)(g_s)=\]
\[=\sum_{v_s\leq v}\lambda_s G\big(D(v_s,\tau)\leq D(v,\tau )\big)(h_s)
\]
This implies that $G\big(v\leq D(v,\tau)\big)$ maps  any $x$ in $G(v)$ into $G[\tau]$. Consequently the homomorphism
$G/(G[\tau])\big(v\leq D(v,\tau)\big)$ is trivial.

 Let  $G_0\subset G$ be  a  tame functor for which
$G/G_0$ belongs to  $\DD_\tau$.  Thus, for any $s$ such that $D(v_s,\tau)\not=\infty$,
the homomorphism $G\big(v_s\leq D(v_s,\tau)\big)$ maps the generator  $g_s$ to  an element in  $G_0$, and so  $h_s=G\big(v_s\leq D(v_s,\tau)\big)(v_s)$  belongs to $G_0$. Consequently
the functor $G[\tau]$, generated by these $h_s$, is a subfunctor of $G_0$.
\end{proof}

\begin{example}\label{ex standardpc}
Choose an object $w$ in ${\mathbb Q}^r$. For any  $(v,\epsilon)$    in  ${\mathbb Q}^r_{\infty}\times {\mathbb Q}$,
define:
\[S_w(v,\epsilon):=v+\epsilon w\] 
In particular,  according to  our convention $\infty+\epsilon w=\infty$,
we have $S_w(\infty,\epsilon)=\infty$.  It is straightforward to verify that  $S_{w}$ is a persistence contour.
For this contour the following  equality  holds  $S_u\big(S_w(v,\epsilon),\tau\big)=v+\epsilon w+\tau w= S_w(v,\epsilon+\tau)$.
The functor $S_w$ is called the   {\bf standard} persistence contour in the direction $w$.
 The associated simple noise system is given by:
\[({\mathcal S}_w)_{\epsilon}=\left\{
\begin{array}{l|c}
\multirow{2}{*}{$G\colon{\mathbf Q}^r\to \text{Vect}_K$} &G
\text{ is a finitely generated tame functor such that }\\
 & G(v\leq v+\epsilon w)\text{ is trivial for any $v$}
\end{array}
\right\}
\]
In~\cite{martina} this noise system is called the standard noise in the direction of  $w$.
The $\tau$-shift $G_{{\mathcal S}_w}[\tau]\subset G$  is given by  the subfunctor generated by all the elements of the form
$ G(v_s\leq v_s+\tau w)(g_s)$ where $\{g_s\in G(v_s)\}_{s=1}^n$  is a minimal set of generators of $G$.
For example, consider $v\leq u$    in ${\mathbf Q}^r$. The functor $[v,u)$ (see~\ref{pt bar}) belongs to $({\mathcal S}_w)_{\epsilon}$ if and only if  $u\leq v+\epsilon w$.
\end{example}

We are now ready to state and prove a constructive characterization of simple noise systems. Any such noise is associated with a unique  persistence contour. 

\begin{thm}\label{thm:contour bijection}
The function that assigns to a persistence contour  the noise system defined in~\ref{not svcadfvadfva}  is a bijection between  the set of persistence contours and  the set of simple noise systems.
\end{thm}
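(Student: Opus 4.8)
The plan is to prove bijectivity by exhibiting an explicit inverse: the rule $\Psi$ that sends a simple noise system $\mathcal{C}$ to the persistence contour $C$ of Notation~\ref{not adfaergsfhdg}. Denote by $\Phi$ the function of the statement, sending a persistence contour $D$ to the noise system $\mathcal{D}$ of Notation~\ref{not svcadfvadfva}. That $\Phi$ takes values in simple noise systems is Proposition~\ref{prop svfwrtg}, and that $\Psi$ takes values in persistence contours is exactly the content of the discussion preceding Definition~\ref{def:func}. So the task reduces to verifying the two identities $\Psi\Phi=\text{id}$ and $\Phi\Psi=\text{id}$.

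First I would handle $\Psi\Phi=\text{id}$, which is essentially a bookkeeping check. Fix a persistence contour $D$ and put $\mathcal{D}=\Phi(D)$. The displayed description of $K(v,-)[\tau]$ immediately after Notation~\ref{not svcadfvadfva} tells us that $K(v,-)_{\mathcal{D}}[\epsilon]=0$ exactly when $D(v,\epsilon)=\infty$, and that $K(v,-)_{\mathcal{D}}[\epsilon]\cong K(D(v,\epsilon),-)$ otherwise. Feeding this into the recipe for $\Psi(\mathcal{D})=C$ in Notation~\ref{not adfaergsfhdg}, and using that $K(a,-)\cong K(b,-)$ forces $a=b$ (see~\ref{pt free}), we obtain $C(v,\epsilon)=D(v,\epsilon)$ for every $v$ in ${\mathbb Q}^r$; moreover $C(\infty,\epsilon)=\infty=D(\infty,\epsilon)$, the last equality because property~(1) in Definition~\ref{def:func} forces $\infty\leq D(\infty,\epsilon)$. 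Hence $\Psi(\Phi(D))=D$.

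Next I would prove $\Phi\Psi=\text{id}$. Fix a simple noise system $\mathcal{C}$, set $C=\Psi(\mathcal{C})$ and $\mathcal{D}=\Phi(C)$. Both $\mathcal{C}$ and $\mathcal{D}$ are closed under direct sums ($\mathcal{C}$ by simplicity, $\mathcal{D}$ by Proposition~\ref{prop svfwrtg}), so by Proposition~\ref{prop cyclcialldirsum} it is enough to show that $\mathcal{C}_\epsilon$ and $\mathcal{D}_\epsilon$ contain the same cyclic functors, for each $\epsilon$. Let $G$ be cyclic, generated by $g\in G(v)$, so $G$ is a quotient $p\colon K(v,-)\to G$ with $p(\text{id}_v)=g$ and every $G(w)$ spanned by $G(v\leq w)(g)$. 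If $C(v,\epsilon)=\infty$, then by construction $K(v,-)_{\mathcal{C}}[\epsilon]=0$, so $K(v,-)\in\mathcal{C}_\epsilon$ and hence $G\in\mathcal{C}_\epsilon$; monotonicity of $C$ also gives $C(w,\epsilon)=\infty$ for all $w\geq v$, while $G(w)=0$ for $w\not\geq v$, so $G\in\mathcal{D}_\epsilon$ too. If $C(v,\epsilon)\neq\infty$, then $K(v,-)_{\mathcal{C}}[\epsilon]$ is the copy of $K(C(v,\epsilon),-)$ inside $K(v,-)$, and its generator is sent by $p$ to $G(v\leq C(v,\epsilon))(g)$; by the characterization recorded after Proposition~\ref{prop:exmin} (a tame subfunctor $G_0\subseteq K(v,-)$ has quotient in $\mathcal{C}_\epsilon$ iff $G_0\supseteq K(v,-)_{\mathcal{C}}[\epsilon]$), membership $G\in\mathcal{C}_\epsilon$ is equivalent to $\ker p\supseteq K(C(v,\epsilon),-)$, i.e.\ to $G(v\leq C(v,\epsilon))=0$. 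On the other hand $G\in\mathcal{D}_\epsilon$ means $G(w\leq C(w,\epsilon))=0$ for all $w$ with $C(w,\epsilon)\neq\infty$; this is automatic for $w\not\geq v$ (there $G(w)=0$), and for $w\geq v$ the factorization $G(v\leq C(w,\epsilon))=G(C(v,\epsilon)\leq C(w,\epsilon))\circ G(v\leq C(v,\epsilon))$, valid since $C(v,\epsilon)\leq C(w,\epsilon)$ by monotonicity of $C$, shows that the single vanishing $G(v\leq C(v,\epsilon))=0$ forces the whole family. Therefore $G\in\mathcal{C}_\epsilon\iff G\in\mathcal{D}_\epsilon$, whence $\mathcal{C}_\epsilon=\mathcal{D}_\epsilon$ for all $\epsilon$ and $\Phi(\Psi(\mathcal{C}))=\mathcal{C}$.

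The hard part is this last identity $\Phi\Psi=\text{id}$; the first identity and the well-definedness statements are quick. Within it, the step that takes care is recognizing that for a cyclic functor the condition $G\in\mathcal{C}_\epsilon$ collapses to the single equation $G(v\leq C(v,\epsilon))=0$, and then checking that functoriality of the persistence contour $C$ propagates this one vanishing to all the vanishings that cut out $\mathcal{D}_\epsilon$. Reducing from arbitrary finitely generated tame functors to cyclic ones via Proposition~\ref{prop cyclcialldirsum} is what keeps the argument short; a direct argument would instead have to track minimal generating sets and decompose shifts of direct sums using Proposition~\ref{prop:exmin}(3), which is feasible but more laborious.
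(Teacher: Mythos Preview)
Your proof is correct and follows essentially the same strategy as the paper: both use Notation~\ref{not adfaergsfhdg} as the inverse, verify $\Psi\Phi=\text{id}$ via the explicit formula for $K(v,-)[\tau]$, and verify $\Phi\Psi=\text{id}$ by reducing to cyclic functors via Proposition~\ref{prop cyclcialldirsum} and then splitting on whether $C(v,\epsilon)=\infty$. The only cosmetic difference is that in the case $C(v,\epsilon)\neq\infty$ the paper invokes the commutative square from Proposition~\ref{prop:exmin}(2) to characterize $G\in\mathcal{C}_\epsilon$ as $G_{\mathcal{C}}[\epsilon]=0$, whereas you apply the minimality characterization directly to $\ker p\subset K(v,-)$; these are equivalent formulations of the same observation.
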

\begin{proof}
We are going to prove that the construction~\ref{not adfaergsfhdg} of a persistence contour associated to  a simple noise system 
is the inverse to the function claimed to be a bijection in the theorem.
 
Let  $D\colon {\mathbf Q}^r_{\infty}\times {\mathbf Q}\to {\mathbf Q}^r_{\infty}$ be a persistence contour and 
${\mathcal D}'=\{{\mathcal D}'_{\epsilon}\}_{\epsilon\in {\mathbb Q}}$ be the associated simple noise system
as defined in~\ref{not svcadfvadfva}. 
In~\ref{prop svfwrtg} it was shown that, for a finitely generated tame functor $G$, the subfunctor $G[\tau]\subset G$ constructed in~\ref{not svcadfvadfva} coincides with $G_{{\mathcal D}'}[\tau]$, i.e., it is the smallest  subfunctor for which $G/G[\tau]$ belongs to ${\mathcal D}'_{\tau}$. According to this construction:
\[K(v,-)[\tau]=\begin{cases}
0& \text{if } D(v,\tau)=\infty\\
K\big(D(v,\tau)-\big) &\text{if } D(v,\tau)\not=\infty
\end{cases}\]
Thus the persistence contour, constructed in~\ref{not adfaergsfhdg} for  the noise system  ${\mathcal D}'$,   is equal to $D\colon {\mathbf Q}^r_{\infty}\times {\mathbf Q}\to {\mathbf Q}^r_{\infty}$.

Let ${\mathcal D}=\{{\mathcal D}_{\epsilon}\}_{\epsilon\in {\mathbb Q}}$ be a simple noise system. Consider the associated  persistence contour $D\colon {\mathbf Q}^r_{\infty}\times {\mathbf Q}\to {\mathbf Q}^r_{\infty}$  
as defined in~\ref{not adfaergsfhdg}. Let  ${\mathcal D}'=\{{\mathcal D}'_{\epsilon}\}_{\epsilon\in {\mathbb Q}}$ be the noise system associated to the contour $D$ as define in~\ref{not svcadfvadfva}.
We need to show ${\mathcal D}={\mathcal D}'$. Since both of these noise systems 
are closed under  direct sums,   it is enough  to prove  ${\mathcal D}$ and ${\mathcal D}'$  contain the same cyclic functors (see~\ref{prop cyclcialldirsum}).

Let $G$ be a cyclic functor. Choose  an epimorphism  $\pi\colon K(v,-)\twoheadrightarrow G$. 
 Assume $D(v,\tau)=\infty$. This has two consequences. First,  $K(v,-)_{\mathcal D}[\tau]=0$ and 
 so $K(v,-)$ is a member of ${\mathcal D}_{\epsilon}$.  Second, $K(v,-)$ belongs to ${\mathcal D}'_{\tau}$
 (see the paragraph after~\ref{not svcadfvadfva}).
 As its quotient, $G$ then belongs to both ${\mathcal D}_{\tau}$ and ${\mathcal D}'_{\tau}$.
 
 Assume $D(v,\tau)\not=\infty$. Then $K(v,-)_{{\mathcal D}}[\tau]=K\big(D(v,\tau),-\big)$ and we have a commutative square, where the indicated arrows are epimorphisms (see~\ref{prop:exmin}):
 \[\xymatrix{
K\big(D(v,\tau),-\big)\ar@{^(->}[r]\ar@{->>}[d] & K(v,-)\ar@{->>}[d]\\
G_{\mathcal D}[\tau]\ar@{^(->}[r] & G
}\]
Commutativity of this diagram implies that  the homomorphism $G\big(v\leq D(v,\tau)\big)$ is trivial if and only
the homomomorphism $K\big(D(v,\tau),-\big)\twoheadrightarrow G_{\mathcal D}[\tau]$ is trivial.
As this is an epimorphism we get an equivalence: $G_{\mathcal D}[\tau]=0$ if and only if 
 $G\big(v\leq D(v,\tau)\big)$ is trivial. The statement  $G_{\mathcal D}[\tau]=0$  is equivalent to
 $G$ belonging to ${\mathcal D}_{\tau}$ and the homomorphism $G\big(v\leq D(v,\tau)\big)$ being  trivial is equivalent to
 $G$ belonging to ${\mathcal D}'_{\tau}$. We can  conclude that ${\mathcal D}_{\tau}$ and ${\mathcal D}'_{\tau}$ contain the same cyclic functors. 
 \end{proof}
 
 \begin{point}\label{pt trancations}
{\bf Truncations.} Let  $C\colon  {\mathbf Q}^r_{\infty}\times {\mathbf Q}\to {\mathbf Q}^r_{\infty}$ be a persistence contour  and ${\mathcal C}=\{{\mathcal C}_{\epsilon}\}_{\epsilon\in{\mathbb Q}}$  be the associated simple noise system (see~\ref{not svcadfvadfva}). Choose  an element  $u$  in ${\mathbb Q}^r$.  For $(v,\epsilon)$ in  
${\mathbb Q}^r_{\infty}\times {\mathbb Q}$ define:
\[
(C/u)(v,\epsilon):=\begin{cases}
C(v,\epsilon) & \text{if } u\not\leq C(v,\epsilon)\\
\infty & \text{if } u\leq C(v,\epsilon)
\end{cases}
\]
The fact that   $C/u$ is a functor   and  the relation $v\leq (C/u)(v,\epsilon)$ are clear.  Thus to show  $C/u$ is a  persistence contour, it remains  to prove  
$ (C/u)\big( (C/u)(v,\epsilon),\tau\big)\leq(C/u)(v,\epsilon+\tau)$. This inequality is obvious if 
$u\leq C(v,\epsilon)$ as in this case both sides  are equal to $\infty$. Assume $u\not \leq C(v,\epsilon)$. Since 
$v\leq C(v,\epsilon)$, then also  $u\not\leq v$. Thus in this case $ (C/u)\big( (C/u)(v,\epsilon),\tau\big)=
C\big( C(v,\epsilon),\tau\big)$ and $(C/u)(v,\epsilon+\tau)= C(v,\epsilon+\tau)$ and  the required inequality also holds in this case as $C$ is a persistence contour. The contour $C/u$  is called
the {\bf truncation} of $C$ at $u$. 

For example, let $S_w\colon  {\mathbf Q}^r_{\infty}\times {\mathbf Q}\to {\mathbf Q}^r_{\infty}$ be the standard persistence contour in the direction of $w$ (see~\ref{ex standardpc}). Then
\[
(S_{w}/u)(v,\epsilon):=\begin{cases}
v+\epsilon w & \text{if } u\not\leq v+\epsilon w\\
\infty & \text{if } u\leq v+\epsilon w
\end{cases}
\]

The noise system ${\mathcal C}/u$, associated  with the truncated contour $C/u$, is  called
the {\bf truncation} of  ${\mathcal C}$ at $u$.
This simple  noise system ${\mathcal C}/u$
is the smallest noise system such that $K(u,-)$ belongs to $({\mathcal C}/u)_{0}$  and ${\mathcal C}_{\epsilon}\subset ({\mathcal C}/u)_{\epsilon}$ for any $\epsilon$ in ${\mathbb Q}$.  

Let $G\colon{\mathbf Q}^r\to\text{Vect}_K$ be a finitely generated tame functor and $\{g_s\in G(v_s)\}_{s=1}^n$  be its  a minimal set of generators.
The    translation $G_{{\mathcal C}/u}[\tau]\subset G$ is   the subfunctor generated by the following set:
\[\{G(v_s\leq v_s+\tau w)(g_s)\ |\ \text{for $s$ such that }C(v_s,\tau)\not=\infty\text{ and } u\not\leq C(v_s,\tau)\}\]
\end{point}

\section{The case $r=1$}\label{sec:r1}
Throughout this section $r=1$. Let us also fix a simple noise system ${\mathcal C}=\{{\mathcal C}_{\epsilon}\}_{\epsilon\in{\mathbb Q}}$ in ${\mathbb T}({ \mathbf  Q}, \text{Vect}_K)$. Let $C\colon  {\mathbf Q}_{\infty}\times {\mathbf Q}\to {\mathbf Q}_{\infty}$ be the associated persistence contour. 

For $v\leq w$   in ${\mathbf Q}$,  functors of the form $[v,w)$  (see~\ref{pt bar})  and
$K(v,-)$ are called   {\bf bars} of length $w-v$ and $\infty$ respectively. The bars are the indecomposable elements
in ${\mathbb T}({ \mathbf  Q}, \text{Vect}_K)$, i.e.\  any finitely generated tame functor indexed by ${\mathbf Q}$ 
 is isomorphic to a finite direct sum of bars and  the isomorphism types of  these   summands are uniquely determined by the isomorphism type of the functor (see for example~\cite{MR3575998},~\cite{MR3524869}, \cite[Proposition 5.6]{martina}).    Such a direct sum decomposition   is called a 
 {\bf barcode}. The rank   $\beta_0G$  is equal to  the number  of  bars in a barcode  of $G$.  
 
 A subfunctor of a finitely generated free functor is also free and of smaller or equal rank.  Consequently, for a tame subfunctor $F\subset G$, there is an inequality $\beta_0F\leq \beta_0G$.  This together with~\ref{thm calsimple}  implies:
 
 \begin{prop} 
 For any finitely generated tame functor 
 $G\colon {\mathbf Q}\to  \text{\rm Vect}_K$ and $\tau$   in   ${\mathbb Q}$:
 \[\widehat{\beta}_0 G(\tau)=\beta_0 G[\tau]\]
 \end{prop}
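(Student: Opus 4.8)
The plan is to deduce the statement directly from Theorem~\ref{thm calsimple} together with the rank inequality for subfunctors recalled just above it. First I would fix, as is possible when $r=1$, an element $v$ in ${\mathbf  Q}$ with $\text{supp}(G[\tau])\leq v$: the functor $G[\tau]$ is tame and, since the noise system is simple, $\text{rank}_0 G[\tau]\leq \text{rank}_0 G<\infty$, so $G[\tau]$ is finitely generated and $\text{supp}(G[\tau])$ is a finite subset of $\mathbb{Q}$; as ${\mathbf  Q}$ is totally ordered, such a $v$ exists. Theorem~\ref{thm calsimple} then yields
\[\widehat{\text{\rm rank}}_0 G(\tau)=\text{\rm min}\{\text{\rm rank}_0 F\mid F\text{ is a tame subfunctor of }G,\ G[\tau]\subset F\subset G,\ \text{supp}(F)\leq v\}.\]

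Next I would check that $G[\tau]$ itself lies in the family over which this minimum is taken: it is a tame subfunctor of $G$, it satisfies $G[\tau]\subset G[\tau]\subset G$ trivially, and $\text{supp}(G[\tau])\leq v$ by the choice of $v$. Hence $\widehat{\text{\rm rank}}_0 G(\tau)\leq \text{\rm rank}_0 G[\tau]$. For the reverse inequality I would use that every $F$ occurring in the minimum contains $G[\tau]$ as a subfunctor; since $G[\tau]$ is finitely generated, it is a tame subfunctor of $F$, and by the inequality "$\text{rank}_0(\text{subfunctor})\leq\text{rank}_0$" (which for $r=1$ comes from pulling $G[\tau]$ back along a minimal free cover of $F$ and using that subfunctors of finitely generated free ${\mathbf  Q}$-indexed functors are free of no larger rank) we get $\text{\rm rank}_0 G[\tau]\leq \text{\rm rank}_0 F$. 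Therefore the minimum is at least $\text{\rm rank}_0 G[\tau]$, and combining the two bounds gives the asserted equality.

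There is no genuinely hard step here: the substance has already been packaged into Theorem~\ref{thm calsimple}, which localizes $\widehat{\text{\rm rank}}_0 G(\tau)$ to a minimum over subfunctors sandwiched between $G[\tau]$ and $G$, and into the $r=1$ structure theory (every subfunctor of a finitely generated free ${\mathbf  Q}$-indexed functor is free of smaller or equal rank, whence ranks only drop along inclusions of tame subfunctors). The only point needing a line of care is verifying that $G[\tau]$ legitimately belongs to the indexing family of the minimum — i.e.\ that it is tame and has support $\leq v$ — and both of these follow from simplicity of the noise system and the choice of $v$.
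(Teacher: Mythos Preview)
Your argument is correct and follows the same route as the paper: invoke Theorem~\ref{thm calsimple} to reduce to a minimum over tame subfunctors $F$ with $G[\tau]\subset F\subset G$, then use the $r=1$ fact that subfunctors of finitely generated free functors are free of no larger rank to conclude that $G[\tau]$ itself realizes this minimum. The paper states it more tersely, but your explicit verification that $G[\tau]$ belongs to the indexing family and your spelled-out two-sided inequality are exactly the content behind the paper's one-line deduction.
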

 
 Since taking the shift commutes with direct sums (see~\ref{prop:exmin}.(3)), according to the  above proposition, so does the stabilization of the rank:  if $F,G\colon {\mathbf Q}\to  \text{\rm Vect}_K$  are finitely generated and tame, then for any  $\tau$   in   ${\mathbb Q}$
 \[\widehat{\beta}_0 (F\oplus G)(\tau)=\beta_0 (F\oplus G)[\tau]=
 \beta_0 (F[\tau]\oplus G[\tau])=\]
 \[\beta_0 F[\tau]+ \beta_0 G[\tau]=\widehat{\beta}_0 F(\tau)+
 \widehat{\beta}_0 G(\tau)\]
 Thus if we decompose $G$ as a direct sum of bars $\oplus_{s=1}^n B_s$, then:
 \[\widehat{\beta}_0 G(\tau)=\widehat{\beta}_0(\oplus_{s=1}^n B_s)(\tau)=
 \sum_{s=1}^n \widehat{\beta}_0 B_s(\tau)= \sum_{s=1}^n\beta_0B_s[\tau]
 \]
 Recall:
 \[\beta_0 K(v,-)[\tau]=\begin{cases}
 1 &\text{if } C(v,\tau)\not=\infty \text{ or equivalently if } K(v,-)\not\in{\mathcal C}_\tau\\
 0 &\text{if } C(v,\tau)=\infty \text{ or equivalently if } K(v,-)\in{\mathcal C}_\tau\
 \end{cases}
 \]
 \[\beta_0 [v,w)[\tau]=\begin{cases}
 1 &\text{if } C(v,\tau)<w \text{ or equivalently if } [v,w)\not\in{\mathcal C}_\tau\\
 0 &\text{if } C(v,\tau)\geq w  \text{ or equivalently if } [v,w)\in{\mathcal C}_\tau\
 \end{cases}
 \]
 We can conclude that,  for a finitely generated tame functor $G\colon {\mathbf Q}\to  \text{\rm Vect}_K$,   $\widehat{\beta}_0 G(\tau)$ is equal to the number of bars, in a barcode of $G$, that do not belong to ${\mathcal C}_\tau$ (a more general statement can be found in~\cite[Section 10]{martina}).
 
 \begin{example}\label{ex standtranc}
 Let ${\mathcal S}$ be the simple  noise system in  ${\mathbb T}({ \mathbf  Q}, \text{Vect}_K)$
 associated with the standard contour $S\colon  {\mathbf Q}_{\infty}\times {\mathbf Q}\to {\mathbf Q}_{\infty}$ given by  $S(v,\epsilon)=v+\epsilon$ (see~\ref{ex standardpc}).  For this noise system, the  $\tau$-shift of
  $G=\big(\bigoplus_{\substack{1\leq s\leq n}} [v_s,w_s)\big)\oplus \big( \bigoplus_{\substack{1\leq t\leq m}} K(u_t,-)\big)$ has the following barcode:
  \[G_{\mathcal S}[\tau]=
 \bigoplus_{\substack{1\leq s\leq n\\ \tau<w_s-v_s}} [v_s+\tau,w_s)\oplus
  \bigoplus_{\substack{1\leq t\leq m}} K(u_t+\tau,-) \] 
  The value $\widehat{\beta}_0 G(\tau)=\beta_0G_{\mathcal S}[\tau]$ is therefore  given by the number of bars, in a barcode of $G$, of length strictly bigger  than $\tau$.
  Thus the invariant  $G \mapsto \widehat{\beta}_0 G$  encodes information about the length of bars in a barcode of $G$ but not  where  the bars  start and end.  This loss of information can be recovered however by  considering truncations of the standard noise (see~\ref{pt trancations}).
  Choose $u$    in $\mathbb Q$ and  consider  the truncated noise system ${\mathcal S}/u$ 
(see~\ref{pt trancations}). Recall
that ${\mathcal S}/u$ is associated with the contour given by:
\[ (S/u)(v,\epsilon)= \begin{cases}
v+\epsilon&\text{if } v+\epsilon< u\\
\infty &\text{if } u\leq v+\epsilon
\end{cases}\]
For this truncated noise system the $\tau$-shift
of the functor $G$ has the following barcode:
\[G_{{\mathcal S}/u}[\tau]=
\bigoplus_{\substack{1\leq s\leq n\\ \tau<w_s-v_s\\ v_s<u-\tau}} [v_s+\tau,w_s)\oplus
  \bigoplus_{\substack{1\leq t\leq m\\ u_t<u-\tau}} K(u_t+\tau,-) 
\]
The value $\widehat{\beta}_0 G(\tau)=\beta_0G_{{\mathcal S}/u}[\tau]$ is therefore  given by the number of bars, in a barcode of $G$, of length strictly bigger  than $\tau$ and whose starting
points are strictly smaller than $u-\tau$.
 \end{example}
 
   \begin{prop}\label{prop recovstandard}
  Let $\mathcal S$ be the standard noise system considered in~\ref{ex standtranc}. Then  two finitely generated tame functors  $F,G\colon {\mathbf Q}\to  \text{\rm Vect}_K$ are isomorphic if and only if  $\beta_0F_{{\mathcal S}/u}[\tau]=\beta_0G_{{\mathcal S}/u}[\tau]$
  for any $\tau$ and $u$ in ${\mathbb Q}$.
  \end{prop}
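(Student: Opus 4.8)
The non-trivial implication is the ``if'' direction; the ``only if'' direction is immediate, since isomorphic functors have isomorphic $\tau$-shifts and $\text{rank}_0$ is an isomorphism invariant. So assume $\text{rank}_0 F_{{\mathcal S}/u}[\tau]=\text{rank}_0 G_{{\mathcal S}/u}[\tau]$ for all $\tau,u$ in ${\mathbb Q}$. By the uniqueness of bar decompositions recalled at the beginning of this section, $F$ and $G$ are isomorphic as soon as, for every $a$ in ${\mathbb Q}$ and every finite $\ell>0$, they have the same number of summands isomorphic to $[a,a+\ell)$, and the same number of summands isomorphic to $K(a,-)$. The plan is to show that the two-variable function $(\tau,u)\mapsto\text{rank}_0 G_{{\mathcal S}/u}[\tau]$ determines all of these multiplicities.

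Fix a bar decomposition $G\cong\big(\bigoplus_{s=1}^n[v_s,w_s)\big)\oplus\big(\bigoplus_{t=1}^m K(u_t,-)\big)$, which is a finite decomposition, so only finitely many starting points and lengths occur. By the computation of $G_{{\mathcal S}/u}[\tau]$ established in Example~\ref{ex standtranc},
\[
\text{rank}_0 G_{{\mathcal S}/u}[\tau]=\#\{s\mid w_s-v_s>\tau,\ v_s<u-\tau\}+\#\{t\mid u_t<u-\tau\},
\]
i.e.\ it is the number of bars of $G$ of length strictly greater than $\tau$ whose starting point is strictly less than $u-\tau$. First I would fix $\tau$ and vary $u$: then $u\mapsto\text{rank}_0 G_{{\mathcal S}/u}[\tau]$ is a non-decreasing step function with finitely many jumps, and its total jump across the value $u=a+\tau$ (made precise as $\inf_{u>a+\tau}\text{rank}_0 G_{{\mathcal S}/u}[\tau]-\sup_{u\le a+\tau}\text{rank}_0 G_{{\mathcal S}/u}[\tau]$, both extrema being attained at nearby rationals since there are finitely many jumps) equals the number $P_G(\tau)(a)$ of bars of $G$ of length $>\tau$ starting exactly at $a$. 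Hence the hypothesis forces $P_F(\tau)(a)=P_G(\tau)(a)$ for all $a,\tau$.

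Next I would fix $a$ and vary $\tau$: the function $\tau\mapsto P_G(\tau)(a)$ is non-increasing, integer valued, and eventually constant (finitely many bars) with limiting value equal to the number of summands of $G$ isomorphic to $K(a,-)$; and for each finite $\ell>0$ the one-sided difference $\lim_{\tau\to\ell^{-}}P_G(\tau)(a)-\lim_{\tau\to\ell^{+}}P_G(\tau)(a)$ equals the number of summands isomorphic to $[a,a+\ell)$. Thus all these multiplicities are recovered from $P_G$, hence from $(\tau,u)\mapsto\text{rank}_0 G_{{\mathcal S}/u}[\tau]$, and $F\cong G$ follows from uniqueness of the bar decomposition.

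The argument is essentially bookkeeping, and the only point that needs a little care --- the closest thing to an obstacle --- is the legitimacy of the one-sided limits used to isolate the individual jumps. This is exactly where the finiteness of the bar decompositions of $F$ and $G$ enters: each step function appearing above has only finitely many jump points, so the relevant one-sided values are achieved at nearby rationals and the density of ${\mathbb Q}$ suffices; one should also note at the outset that the identity is assumed for \emph{all} pairs $(\tau,u)$, so one really has equality of the two step functions and not merely of finitely many sampled values.
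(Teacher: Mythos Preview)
Your argument is correct. It is, however, a genuinely different route from the paper's proof. The paper argues by induction on the number of distinct bar lengths appearing in $F$: it first shows that the longest bar length in $F$ and in $G$ must agree (by choosing suitable $\tau$ in between candidate lengths and $u$ large), then peels off the direct summand of bars of that maximal length, checks that the induced rank identities persist for the remaining summands, and invokes the inductive hypothesis. Your approach instead extracts the full barcode directly: you treat $u\mapsto\text{rank}_0 G_{{\mathcal S}/u}[\tau]$ as a step function, read off $P_G(\tau)(a)$ as its jump at $u=a+\tau$, and then read off the multiplicity of each bar $[a,a+\ell)$ (and of $K(a,-)$) from the jumps of $\tau\mapsto P_G(\tau)(a)$. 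This ``double-difference'' argument is shorter and more transparent, and it makes explicit that the two-parameter family $(\tau,u)$ is exactly what is needed to separate bar lengths from bar starting points; the paper's inductive proof establishes the same fact but less directly. Your closing remark about finiteness of the set of jump points (so that all one-sided limits are realized at nearby rationals) is the right justification, and it is worth noting that all relevant jump locations are themselves rational since every bar endpoint lies in ${\mathbb Q}$.
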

  \begin{proof} 
The if implication is obvious. Assume $\beta_0F_{{\mathcal S}/u}[\tau]=\beta_0G_{{\mathcal S}/u}[\tau]$ for any  $\tau$ and $u$ in ${\mathbb Q}$.  Fix barcodes of $F$ and $G$.  If we choose  $u$  strictly bigger than  the beginnings of the  bars in the decompositions of $F$ and $G$, then
the equalities $\beta_0F=\beta_0F_{{\mathcal S}/u}[0]=\beta_0G_{{\mathcal S}/u}[0]=\beta_0G$
imply  then decompositions of $F$ and $G$ have the same number of bars.
To show  $F$ and $G$ are isomorphic we proceed by induction on the number of different lengths of bars in the decomposition of $F$.

Assume first  all the bars in the decomposition of $F$ have the same length $l_F$. If there is a bar in $G$ of length $l$ different than $l_F$, then the numbers $\beta_0F_{{\mathcal S}/u}[\tau]$  and  $\beta_0G_{{\mathcal S}/u}[\tau]$ would be  different if we choose  $\text{max}\{l_F,l\}>\tau> \text{min}\{l_F,l\}$ and 
 $u$ to be  strictly larger than  $\tau$ plus the beginnings of all the  bars in the decompositions of $F$ and $G$.
 Thus all the bars in the decomposition of $G$ have also the same length $l_F$. By setting $\tau=0$ and varying $u$, we can then see  that the number of bars in $F$ and $G$,  which start  at a particular point, are the same.
Consequently $F$ and $G$ are isomorphic.

Assume  $F$ has bars of different length in its barcode.
Let $l_{F}$  and $l_G$   be the biggest  length and $l'_{F}$ and $l'_{G}$ be the next biggest length of any bar in the decomposition of $F$ and  $G$ respectively. 
Choose    $\tau$ satisfying $l_{F}>\tau>l'_{F}$ and $u$ to be bigger than $\tau$ plus the beginnings of all the bars in the decomposition of $F$.   Since 
$\beta_0G_{{\mathcal S}/u}[\tau]=\beta_0F_{{\mathcal S}/u}[\tau]\not=0$, then 
$G$ contains bars of length bigger than $\tau$. As this holds for any $\tau$ satisfying $l_{F}>\tau>l'_{F}$,
 we can conclude  $l_{G}\geq l_{F}$.  If  $l_{G}> l_{F}$, then for $l_{G}> \tau>l_{F}$ and $u$ bigger than $\tau$ plus the   beginnings of all the bars in the decomposition of $G$, the numbers 
 $\beta_0F_{{\mathcal S}/u}[\tau]$  and  $\beta_0G_{{\mathcal S}/u}[\tau]$ 
 would be  different.  We therefore have $l_{G}= l_{F}$. Furthermore the number of bars of length
 $l_{F}$ in the  decompositions of both $F$ and $G$   is the same. Let $F'$ and $G'$ be the direct sums of all the bars of length $l_F$ in the decompositions of respectively  $F$ and $G$.  Note that for  $\tau\geq \text{max}\{l'_F,l'_G\}$  and any $u$:
 \[\beta_0F'_{{\mathcal S}/u}[\tau]=\beta_0F_{{\mathcal S}/u}[\tau]=
 \beta_0G_{{\mathcal S}/u}[\tau]=\beta_0G'_{{\mathcal S}/u}[\tau]
 \]
 Furthermore for any $\tau'\leq \tau < l_F$  and any $u$:
\[ \beta_0F'_{{\mathcal S}/u}[\tau']=\beta_0F'_{{\mathcal S}/u}[\tau]\ \ \ \ \ \ \ \  
\beta_0G'_{{\mathcal S}/u}[\tau']=\beta_0G'_{{\mathcal S}/u}[\tau]\]
 It follows  $\beta_0F'_{{\mathcal S}/u}[\tau]=\beta_0G'_{{\mathcal S}/u}[\tau]$  for any $\tau$ and $u$.
 According to  the first step of the induction, the functors $F'$ and $G'$ are therefore isomorphic. 
  Let $F''$ and   $G''$ be the direct sums of all the bars of length different than $l_F$ in the decompositions of   $F$ and $G$ respectively.   Then $F$ is isomorphic to $F'\oplus F''$ and $G$ is isomorphic to $G'\oplus G''$. 
  By additivity of the shift and the rank,  for any $\tau$ and $u$:
   \[\beta_0F''_{{\mathcal S}/u}[\tau]=\beta_0F_{{\mathcal S}/u}[\tau]-
   \beta_0F'_{{\mathcal S}/u}[\tau]=\]
\[=   \beta_0G_{{\mathcal S}/u}[\tau]-
   \beta_0G'_{{\mathcal S}/u}[\tau]=
   \beta_0G''_{{\mathcal S}/u}[\tau]\]
By the  inductive hypothesis   $F''$ and $G''$ are  isomorphic and so are   $F$ and $G$.  
  \end{proof}

 \section{The case $r\geq 2$.}\label{sec NPhardness}
 Let $K$ be a finite  field.
 In~\ref{cor redfinite} we proved that, for a  simple noise system,    calculating  $\widehat{\beta}_0G(\tau)$ requires  only finitely many operations.  For $r\geq 2$,  one should not  however expect to find an efficient algorithm to do that. The aim of this section is to show   that  calculating $\widehat{\beta}_0G(\tau)$ is in general an  NP-hard problem if $r\geq 2$. This is in contrast with the case of $r=1$ where
 the complexity is closely related to the complexity of Gaussian elimination (this is the complexity of finding barcodes of such functors, see for example~\cite{MR2121296},~\cite{Edelsbrunner2002}).
 
 To show this NP-hardness we are going to compare calculating  $\widehat{\beta}_0G(\tau)$  with   the RANK-3 problem introduced  in \cite{minrank}.  The RANK-3 problem is an example of a more general problem which we now describe.
 
 \begin{point}
 {\bf Input:}    a sequence $x_1,\ldots, x_k$ of vectors in $K^n$
  and a sequence $L_1,\ldots, L_k$ of vector subspaces of $K^n$ each given by  a system of linear equations.
\end{point}

 \begin{point}{\bf Output:}\label{pt output} \medskip$\text{min}\{\text{dim}(L)\  |\ L\text{ is a subspace of $K^n$ s.t.\  }
x_s\in L+L_s\text{ for  $1\leq s\leq k$}\}.$
\end{point}

 
 The above problem can be reformulated in terms of  finding the  smallest rank of a matrix in a certain  set of matrices.
An $n\times k$ matrix $A=\begin{bmatrix}c_1 &\cdots & c_k \end{bmatrix}$  is said to {\bf belong} to the input above if, for any $1\leq s\leq k$,  the vector $c_s-x_s$ is in   $ L_s$.   Note that if $A$ belongs to the input, then  
$x_s\in \text{span}(c_1,\ldots,c_k)+L_s$ for any $s$.  On the other hand if, for all $s$,  $x_s=c_s+y_s$ where  $c_s$ is in 
$L$ and $y_s$ is in $L_s$, then the matrix $\begin{bmatrix}c_1 &\cdots & c_k \end{bmatrix}$ belongs to the input.
It is then  clear that the number in the output above coincides with:
\[\text{min}\{\text{rank(A)}\ |\ A \text{ belongs to the input}\}\]

The following two sources of inputs  are of primary interest to us:
\begin{example}\label{ex graph}
Let   $X=(V,E)$ be a finite graph with $V=\{1,2,\ldots, n\}$.  
Consider the following collection of $n\times n$ matrices $A$ with coefficients in $K$:
\[\mathcal{A}(X):=\{A\ |\ A_{ss}=1 \text{ for any } s\in V\text{ and } A_{st}=0\text{ for any } \{s,t\}\in E\}\]
For example if  $X$ is colored by  $\chi(X)$ (the chromatic number of $X$)  colors, then the  $n\times n$ matrix
$M$, for which   $M_{st}=1$ if $s$ and $t$ have the same color and
 $M_{st}=0$ otherwise, belongs to  the collection $\mathcal{A}(X)$.  Since   this matrix is of rank $\chi(X)$
 (see~\cite[Section 1]{minrank}),
 the collection $\mathcal{A}(X)$ satisfies   the assumptions of~\cite[Theorem 3.1]{minrank}.
According to this  theorem, for an arbitrary graph $X$,  the RANK-3 problem of  deciding if $\mathcal{A}(X)$  contains a matrix of rank $3$ is  NP-complete.
 
Note that  $\mathcal{A}(X)$ consists of all the matrices that 
  belong to the input   given by the sequence $e_1,\ldots, e_n$   of  vectors in the standard basis for $K^n$ and the sequence $L_1,\ldots, L_n$ of vector subspaces of $K^n$
where $L_s$ is defined as
$L_s=\{y\in  K^n\ |\  y_t=0 \text{ if }  \{s,t\}\in E \text{ or } t=s\}$.
We can therefore  conclude that deciding:
 \[\text{min}\{\text{rank(A)}\ |\ A \text{ belongs to the input}\}=\text{min}\{\text{rank(A)}\ |\ A\in\mathcal{A}(X) \}\leq 3\]
 is an NP-complete problem.   Thus in general the problem of calculating~\ref{pt output} is as hard as the RANK-3 problem.
\end{example}

	


\begin{example}\label{ex gennphard}
Let $C\colon  {\mathbf Q}^r_{\infty}\times {\mathbf Q}\to {\mathbf Q}^r_{\infty}$ be a  persistence contour and 
$\mathcal C$   the associated noise system.  {\bf The initial data} consists of:  
\begin{itemize}
\item  a tame functor  $G\colon{\mathbf Q}^r\to\text{Vect}_K$, 
\item  its minimal set of generators
  $\{g_s\in G(v_s)\}_{s=1}^{n}$, 
  \item  an element $\tau$  in ${\mathbb Q}$, 
  \item   an  element $u$   in ${\mathbb Q}^r$ such that $v_s\leq u\leq C(v_s,\tau)$ for any $1\leq s\leq n$. 
  \end{itemize} With this initial  data we can form the following {\bf input}:
\begin{itemize}
\item  the vector space $G(u)$,
\item  the sequence   
$G(v_1\leq u)(g_1),\ldots, G(v_n\leq u)(g_n)$ of vectors  in $ G(u)$, 
\item the sequence $L_1,\ldots L_n$  of vector subspaces  of  $G(u)$ defined as follows:
\[L_s=\begin{cases}
 \text{Ker}\ G(u\leq C(v_s,\tau)) &\text{if } C(v_s,\tau)\not=\infty\\
 G(u) &\text{if } C(v_s,\tau)=\infty
\end{cases}\]
\end{itemize}
We claim that the output for the input above coincides with $\widehat{\beta}_0G(\tau)$:
\smallskip

\noindent
{$\text{\bf Output}\geq \widehat{\beta}_0G(\tau)$:}\quad
  Let $L$ be the subspace
of $G(u)$ of the smallest dimension such that $G(v_s\leq u)(g_s)\in L+L_s$ for $1\leq s\leq n$. Choose a basis $\{f_1,\ldots, f_m\}$ for $L$ and consider  the subfunctor  $F\subset G$ generated by the elements $\{f_s\in G(u)\}_{s=1}^m$. Note that $\text{rank}(F)=m$. Moreover, since the $\tau$-shift  $G[\tau]$ is generated by the elements $\{G(v_s\leq C(v_s,\tau))(g_s)\ |\ 1\leq s\leq n\text{ and } C(v_s,\tau)\not=\infty\}$,  the functor $F$ contains  $G[\tau]$. Thus according to Theorem~\ref{thm calsimple}   $m\geq \widehat{\beta}_0G(\tau)$. 
\smallskip

\noindent
{$\text{\bf Output}\leq \widehat{\beta}_0G(\tau)$:}\quad
Let $H$ be a finitely generated subfunctor of $G$ of the smallest rank such that $G[\tau]\subset H\subset G$.
Choose a minimal set of generators $\{h_i\in H(w_i)\}_{i=1}^m$. Since $H$ is a subfunctor of $G$ we can express its generators as linear combinations:
\[h_i=\sum_{v_s\leq w_i}a_{is}G(v_s\leq w_i)(g_s)\]
We use the  coefficients $a_{is}$ to define the following vectors in  $G(u)$ for $1\leq i\leq m$:
\[f_i=\sum_{v_s\leq w_i}a_{is}G(v_s\leq u)(g_s)\]
Define $L$ to be the subspace of $G(u)$ generated by these $f_1,\cdots, f_m$. 
Choose $s$ such that  $1\leq s\leq n$  and   $C(v_s,\tau)\not=\infty$. The fact that  $H$ contains the $\tau$-shift $G[\tau]$ implies that the vector
 $G(v_s\leq C(v_s,\tau))(g_s)$ can be expressed as a linear
combination of  $\{G(w_i\leq  C(v_s,\tau))(h_i)\ |\ w_i\leq v_s\}$. 
Since $G$ is a functor, the element $G(v_s\leq C(v_s,\tau))(g_s)$ can then    also be the expressed as a linear
combination of $\{G(u\leq  C(v_s,\tau))(f_i)\}_{i=1}^m$.
This means  that the vector 
$G(v_s\leq u)(g_s)$ belongs to   $L+ \text{Ker}\ G(u\leq  C(v_s,\tau))$.
Since  $\text{dim}(L)\leq m$ the claimed inequality follows. 
\end{example}

It turns out that Example~\ref{ex graph} is a special case of~\ref{ex gennphard} and our next step is to explain 
how the input given by a graph in~\ref{ex graph}  can be described as an input induced by a tame functor  as explained in~\ref{ex gennphard}. We do that with a help of so called band functors:

\begin{point}{\bf Band Functors.}
Let $n\geq 0$ be a natural number and $L_0,\ldots, L_n$ be a sequence of subspaces of $K^{n+1}$.
A band functor $B(L_0,\ldots, L_n)\colon {\mathbf Q}^2\to \text{Vect}_K$ is by definition a $1$-tame functor whose $1$-frame
(see~\ref{pt tamness})
is constructed as follows:
\smallskip

\noindent
{\bf $1$-Frame:}
Let $P\colon {\mathbf N}^2\to \text{Vect}_K$ be the free functor given by the direct sum   $P:=\oplus_{s=0}^{n} K((n-s,s),-)$. 
Note that  $(2n-t,n+t)\geq (n-s,s)$  for   any $0\leq s, t\leq n$. Thus  $P(2n-t,n+t)=\oplus_{s=0}^{n} K((n-s,s),(2n-t,n+t))=K^{n+1}$.
For $0\leq  t\leq n$, we regard  $L_t$ to be the subspace of $K^{n+1}=P(2n-t,n+t)$. Define  the frame  $B\colon {\mathbf N}^2\to\text{Vect}_K$ to be the unique functor for which the following homomorphisms
form a surjective   natural transformation  $\pi\colon P\to B$:
\[\pi_{(a,b)}\colon P(a,b)\to B(a,b)\text{ is given by }\]
\[\begin{cases}
\text{id} & \text{ if }a\leq 2n, b\leq 2n, a+b< 3n\\
\text{the quotient } K^{n+1}\to K^{n+1}/L_{b-n}& \text{ if } a\leq 2n, n\leq b\leq 2n, a+b= 3n\\
P(a,b)\to 0 &\text{ otherwise }
\end{cases}
\]

The following tables give the non-zero values of $B$  for $n=1, 2,$ and $3$:
\[\begin{array}{c|c|c|c|c}
\scriptstyle{3} & 0 & 0 & 0 & 0\\ \hline
\scriptstyle{2} & K & K^2/L_1 & 0 & 0\\ \hline
\scriptstyle{1} & K & K^2 & K^2/L_0 & 0\\ \hline
\scriptstyle{0} & 0 & K & K & 0\\ \hline  
 &\scriptstyle{0} & \scriptstyle{1} & \scriptstyle{2} & \scriptstyle{3}
\end{array}
\ \ \ \ 
\begin{array}{c|c|c|c|c|c|c}
\scriptstyle{5} & 0 & 0 & 0 & 0 & 0 & 0\\ \hline
\scriptstyle{4} & K & K^2& K^3/L_2 & 0 & 0 & 0\\ \hline
\scriptstyle{3} & K & K^2 & K^3 & K^3/L_1 & 0 & 0\\ \hline
\scriptstyle{2} & K & K^2 & K^3 & K^3 & K^3/L_0 & 0\\ \hline
\scriptstyle{1} & 0 & K & K^2 & K^2 & K^2 & 0\\ \hline
\scriptstyle{0} & 0 & 0 & K & K & K  & 0\\ \hline
 & \scriptstyle{0} & \scriptstyle{1} & \scriptstyle{2} & \scriptstyle{3} & \scriptstyle{4} & \scriptstyle{5}
\end{array}
\]
\[\begin{array}{c|c|c|c|c|c|c|c|c}
\scriptstyle{7} & 0 & 0 & 0 & 0 & 0 & 0 & 0 & 0\\ \hline
\scriptstyle{6} & K & K^2 & K^3 & K^4/L_3 & 0 & 0 & 0 & 0\\ \hline
\scriptstyle{5} & K & K^2 & K^3 & K^4 & K^4/L_2 & 0 & 0 & 0 \\ \hline
\scriptstyle{4} & K & K^2& K^3& K^4 & K^4 & K^4/L_1 & 0 & 0\\ \hline
\scriptstyle{3} & K & K^2 & K^3 &K^4 & K^4 & K^4 & K^4/L_0 & 0\\ \hline
\scriptstyle{2} & 0 & K & K^2 & K^3 & K^3 & K^3 & K^3 &  0\\ \hline
\scriptstyle{1} & 0 & 0 & K & K^2 & K^2 & K^2 & K^2 & 0\\ \hline
\scriptstyle{0} & 0 & 0 & 0 & K & K  & K & K & 0\\ \hline
 & \scriptstyle{0} & \scriptstyle{1} & \scriptstyle{2} & \scriptstyle{3} & \scriptstyle{4} & \scriptstyle{5} &  \scriptstyle{6}
 &  \scriptstyle{7}
\end{array}
\]
\smallskip

Fix the standard persistence contour $S\colon  {\mathbf Q}^2_{\infty}\times {\mathbf Q}\to {\mathbf Q}^2_{\infty}$
given by $S(v,\tau)=v+\tau(1,1)$ (see~\ref{ex standardpc}). Consider the following initial data (see~\ref{ex gennphard}):
\begin{itemize}
\item  the band functor  $B(L_0,\ldots, L_n)\colon {\mathbf N}^2\to\text{Vect}_K$, 
\item its minimal set of generators given by
$\{1\in B(n-s,s)=K\}_{s=0}^n$, 
\item the element $\tau=n$ in ${\mathbb Q}$, 
\item the element  $u=(n,n)$ in   ${\mathbb Q}^2$.
\end{itemize}
Note that $(n-s,s)\leq (n,n)\leq (n-s,s)+n(1,1)$  for any $0\leq s\leq n$. Thus this initial data satisfies the 
required assumption (see~\ref{ex gennphard}). We can therefore  conclude that $\widehat{\beta}_0B(\tau)$ coincide with
the output associated with the input given by the sequence of vectors $e_0,\ldots, e_n$ in $K^{n+1}$ and the sequence of subspaces $L_0,\ldots, L_n$ of $K^{n+1}$. Since  no condition was put on these  subspaces we could chose them to coincide with the subspaces  associated to an arbitrary graph as explained  
in Example~\ref{ex graph}. This shows that computing $\widehat{\beta}_0B(\tau)$ is at least as hard as deciding the RANK-3 problem. We can therefore  conclude:
\end{point}

\begin{thm}\label{thm:np}
Let $S\colon  {\mathbf Q}^2_{\infty}\times {\mathbf Q}\to {\mathbf Q}^2_{\infty}$ be the standard persistence contour given by $S(v,\tau)=v+\tau(1,1)$ and $G\colon {\mathbf Q}^2\to\text{\rm Vect}_K$ be an arbitrary finitely generated tame functor.
Deciding $\widehat{\beta}_0G(\tau)\leq 3$   is an NP-complete problem. Consequently 
computing $\widehat{\beta}_0G(\tau)$  is NP-hard. 
\end{thm}
 Using the above result we are now ready to prove our main theorem. 
 
\begin{T1}
Computing the stabilization of the 0-th Betti number is NP-hard.
\end{T1}
\begin{proof}
Note that any $2$-parameter persistence module can be viewed as an $r$-parameter persistence module for $r>2$ where all the maps in the added dimensions are identities. Hence the above statement contains Theorem \ref{thm:np} as a special case, by which the result follows.
\end{proof}


\section{The interleaving distance}
In this section we fix the noise system to be the standard noise system, $\{\mathcal S_\epsilon\}_{\epsilon\in \mathbb Q}$ and let $F, G\in \Tame$. Consider the interleaving distance, as defined in \cite{MR3348168}:
\begin{Def}\label{def:interleaving}
We say that $F$ and $G$ are \textbf{$\epsilon$-interleaved} if there are maps $\varphi_v\colon F(v)\to G(v+\epsilon)$ and $\psi_v\colon G(v)\to F(v+\epsilon)$ such that $\psi_{v+\epsilon}\circ \varphi_v = F(v\leq v+2\epsilon)$ and $\varphi_{v+\epsilon}\circ \psi_v = G(v\leq v+2\epsilon)$. Then
$$d_I(F, G) := \inf \{\epsilon\in [0, \infty)\mid F\text{ and $G$ are $\epsilon$-interleaved}\}$$
\end{Def}
We will show that computing the stabilization of the 0-th Betti number is also NP-hard when using the pseudometric $d_I$. The following proposition show how the two distances relate to each other:
\begin{prop}\label{prop:metriceq}
$$\frac{1}{6}d(F, G) \leq d_I(F, G) \leq d(F, G)$$
\end{prop}
\begin{proof}
%

Suppose that $d_I(F, G)=\epsilon$. Let $H\in \Tame$ be defined by $H(v) := G(v+\epsilon )$ with maps coming from $G$. Then from the map $G(-\leq -+\epsilon)\colon G\to H$ we conclude that $d(G, H) \leq 2\epsilon$. Now since $F$ and $G$ are $\epsilon$-interleaved, there are maps $\varphi_v\colon F(v)\to H(v)$ and $\psi_{v+\epsilon}\colon H(v)\to F(v+2\epsilon)$. By the requirement of commutativity in Defintion \ref{def:interleaving}, we have that the $\varphi_v$ extend to a naural transformation $\varphi\colon F\to H$. Moreover, $\ker{\varphi_v} \subseteq \ker{F(v\leq v+2\epsilon)}$ and thus $\ker{\varphi} \in \mathcal{S}_{2\epsilon}$. To see that $\text{coker}\, \varphi \in \mathcal{S}_{2\epsilon}$, note that there is a section $\psi_{v+\epsilon}$ such that the following diagram commutes:
 \[\xymatrix{
F(v) \ar[r] \ar[d] & H(v) \ar[rd]\ar[d] \ar[ld]_{\psi_{v+\epsilon}} &\\
F(v+2\epsilon) \ar[r] & H(v+2\epsilon)\ar@{->>}[r] & H(v+2\epsilon)/F(v+2\epsilon)
}\]
We conclude that $d(F, H)\leq 4\epsilon$ and thus $d(F, G)\leq 6\epsilon = 6d_I(F, G)$.


Conversely, if $d(F, G)=\epsilon$, there is an $H\in\Tame$ with maps $\varphi\colon H\to F$, $\psi\colon H\to G$ such that $\varphi$ is a $\tau$-equivalence, $\psi$ is a $\mu$-equivalence and $\tau+\mu\leq \epsilon$. We will show that if $\varphi\colon H\to F$ is a $\tau$-equivalence, then $d_I(H, F)\leq \tau$. Suppose that $\ker\varphi\in \mathcal S_a$ and $\text{coker}\varphi\in \mathcal S_b$ such that $a+b\leq \tau$. Then, since $\ker\varphi(v)\to \ker\varphi(v+a)$ is the zero map, there is a section $H/\ker\varphi(v)\to H(v+a)$ such that the following diagram commutes:
 \[\xymatrix{
H(v) \ar[r] \ar@{->>}[d] & H(v+a) \ar@{->>}[d]  \\
H/\ker\varphi(v) \ar@{-->}[ru] \ar[r] & H/\ker\varphi(v+a)
}\]
This shows that $d_I(H, H/\ker\varphi)\leq a$. With an analogous argument we can show that $d_I(H/\ker\varphi, F)\leq b$. By the triangle inequality, this implies that $d_I(H, F)\leq a+b\leq \tau$. Analogously, we find that $d_I(H, G)\leq \mu$ and, again, by the triangle inequality we conclude that $d_I(F, G)\leq \tau+\mu\leq d(F, G)$.
\end{proof}


\begin{remark}\label{rem:subfunctors}
In the above proof, note that if $\varphi\colon G\hookrightarrow F$ is a monomorphism, then $d_I(F, G)=d(F, G)$.
\end{remark}

We now define the stabilization of the 0-th Betti number using the pseudometric $d_I$. For any $\tau$ in $\Bbb Q$:
$$\widehat{\beta}_0^I F(\tau) := \min \{\beta_0 \, G\mid d_I(F, G)\leq \tau\}$$

\begin{thm}
$$\widehat{\beta}_0^I F(\tau) = \widehat{\beta}_0 F(\tau)$$
\end{thm}
\begin{proof}
By Proposition 12.2 we have that $\widehat{\beta}_0^I F(\tau)\geq \widehat{\beta}_0 F(\tau)$. However, by Corollary 8.5 it suffices to optimize over subfunctors, and by Remark \ref{rem:subfunctors} we have that $d_I(F, G)=d(F, G)$ for any subfunctor $G\subseteq F$. Consequently, $\widehat{\beta}_0^I F(\tau)\leq \widehat{\beta}_0 F(\tau)$ and we conclude that the statement holds.
\end{proof}

\begin{T2}
Computing the stabilization of the 0-th Betti number using the interleaving distance is NP-hard.
\end{T2}

\bibliographystyle{plain} 
\bibliography{article}

\end{document}